\theoremstyle{plain}
\newtheorem{theorem}{Theorem}[section]
\newtheorem{cor}[theorem]{Corollary}
\newtheorem{prop}[theorem]{Proposition}
\newtheorem{lemma}[theorem]{Lemma}
\theoremstyle{definition}
\newtheorem{example}[theorem]{Example}
\newtheorem{question}[theorem]{Question}
\newtheorem{definition}[theorem]{Definition}
\newcommand{\R}{\mathbb{R}}
\newcommand{\N}{\mathbb{N}}
\newcommand{\C}{\mathbb{C}}
\newcommand{\Lin}{\mathcal{L}}
\newcommand{\K}{\mathcal{K}}
\newcommand{\eps}{\varepsilon}
\newcommand{\lam}{\lambda}
\DeclareMathOperator{\dist}{dist}
\DeclareMathOperator{\sgn}{sgn}
\DeclareMathOperator{\re}{Re}
\DeclareMathOperator{\NA}{NA}
\DeclareMathOperator{\Id}{Id}
\DeclareMathOperator{\BS}{B\check{S}}
\renewcommand{\leq}{\leqslant}
\renewcommand{\geq}{\geqslant}
\title{The Birkhoff-James orthogonality and norm attainment for multilinear maps}
\author[G.~Choi]{Geunsu Choi}
\address[G.~Choi]{Department of Mathematics, Institute for Industrial and Applied Mathematics, Chungbuk National University, Cheongju, Chungbuk 28644, Republic of Korea}
\email{\texttt{chlrmstn90@gmail.com}}
\author[S.~K.~Kim]{Sun Kwang Kim}
\address[S.~K.~Kim]{Department of Mathematics, Chungbuk National University, Cheongju, Chungbuk 28644, Republic of Korea}
\email{\texttt{skk@chungbuk.ac.kr}}
\thanks{The first and second authors were supported by the National Research Foundation of Korea(NRF) grant funded by the Korea government(MSIT) [NRF-2020R1C1C1A01012267]. }
\date{\today}
\keywords{Banach space, norm attainment, Birkhoff-James orthogonality, Bhatia-\v{S}emrl property, multilinear map}
\subjclass[2010]{Primary 46B04; Secondary  46B07, 46B20}
\begin{document}
	
\begin{abstract}
Very recently, motivated by the result of Bhatia and \v{S}emrl which characterizes the Birkhoff-James orthogonality of operators on a finite dimensional Hilbert space in terms of norm attaining points, the Bhatia-\v{S}emrl property was introduced. The main purpose of this article is to study the denseness of the set of multilinear maps with the Bhatia-\v{S}emrl property which is contained in the set of norm attaining ones. Contrary to the most of previous results which were shown for operators on real Banach spaces, we prove the denseness for multilinear maps on some complex Banach spaces. We also show that the denseness of operators does not hold when the domain space is $c_0$ for arbitrary range. Moreover, we find plenty of Banach spaces $Y$ such that only the zero operator has the Bhatia-\v{S}emrl property in the space of operators from $c_0$ to $Y$.
\end{abstract}

\maketitle

\section{Introduction}
The concept of orthogonality on a normed space was first considered by G. Birkhoff \cite{B} in 1935, known as the Birkhoff-James orthogonality. This extends the classical orthogonality on a Hilbert space to a general Banach space, and does an important role in figuring out the geometric structure of the space. Later in 1999, R. Bhatia and P. \v{S}emrl \cite{BS} found a way to characterize the orthogonality of operators on a finite dimensional Hilbert space in the sense of the norm attainment of operators. Afterwards it was shown by C. K. Li and H. Schneider \cite{LS} that such characterization is not applicable in general even for operators on finite dimensional spaces. In the  recent decade, there have been many works to study operators that the characterization holds, and we say that such operators have the Bhatia-\v{S}emrl property \cite{PS,PSG,SP,SPH}. Our goal of the present paper is also to study multilinear maps such characterization holds.

For a better understanding, we shall introduce some required basic terminologies and backgrounds here. Unless it is written specifically, we denote Banach spaces by $X$, $X_i$ $(1\leq i \leq N,~N\in \mathbb{N})$ and $Y$ over a base field $\mathbb{K} = \R \text{ or } \C$. We write $B_X$ and $S_X$ for the unit ball and unit sphere of $X$, respectively. We write $\Lin^N(X_1,\ldots,X_N;Y)$ to be the space of all $N$-linear maps from $X_1 \times \cdots \times X_N$ into $Y$ equipped with the typical supremum norm on $S_{X_1} \times \cdots \times S_{X_N}$. Especially, we denote the case when $N=1$, the space of operators from $X$ into $Y$, by $\Lin(X;Y)$ and the topological dual space $\Lin(X;\mathbb{K})$ by $X^*$. An $N$-linear map $A\in \Lin^N(X_1,\ldots,X_N;Y)$ is said to \emph{attain its norm} at $(x_1,\ldots,x_N)  \in S_{X_1} \times \cdots \times S_{X_N}$ if $\|A(x_1,\ldots,x_N)\|=\|A\|$, and we write as $A \in \NA(X_1,\ldots,X_N;Y)$ and $(x_1,\ldots,x_N) \in M_A := \{(z_1,\ldots,z_N) \in S_{X_1} \times \cdots \times S_{X_N} \colon \|A(z_1,\ldots,z_N)\|=\|A\| \}$.  We now introduce the orthogonality on Banach spaces which is the main concept of this article.

\begin{definition} \cite{B}
We say a vector $x \in X$ is \emph{orthogonal to $y \in X$ in the sense of Birkhoff-James} if $\|x\| \leq \|x+ \lam y \|$ for any $\lam \in \mathbb{K}$, and it is denoted by $x \perp_B y$.
\end{definition}

Note that the Birkhoff-James orthogonality is not a symmetric notion. A useful characterization of the orthogonality on $X$ is given as follows \cite{J}:
$$\text{For~} x,y \in X, x \perp_B y \text{~ if and only if there exists~}x^* \in S_{X^*} \text{~such that~}x^*(x)=\|x\| \text{~and~}x^*(y)=0.$$

 If we take operators $T \in \NA(X;Y)$, $S \in \Lin(X;Y)$ and a point $x_0 \in M_T$, one may easily see that $Tx_0 \perp_B Sx_0$ implies that $T \perp_B S$. As it is mentioned in the introduction, Bhatia and \v{S}emrl showed in \cite{BS} that the opposite direction of implication still holds when $X=Y$ is a finite dimensional complex Hilbert space. Motivated by this result, the Bhatia-\v{S}emrl property is defined as follows.

\begin{definition} \cite{SPH}
A bounded linear operator $T \in \Lin(X;Y)$ is said to have the \emph{Bhatia-\v{S}emrl property} (in short, \emph{$\BS$ property}) if for any $S \in \Lin(X;Y)$ with $T \perp_B S$, there exists $x_0 \in S_X$ such that $\|Tx_0\|=\|T\|$ and $Tx_0 \perp_B Sx_0$. The set of operators with the $\BS$ property is denoted by $\BS(X;Y)$.
\end{definition} 

It is easy to see that every operator in $\Lin(X;\mathbb{K})$ has the $\BS$ property if and only if $X$ is reflexive from the characterization of Birkhoff-James orthogonality given by James. However, it is known that there is an operator without the $\BS$ property in general \cite{LS}, and so many authors are interested in investigating the ``quantity" of  operators with the property. More precisely, in \cite{SPH}, it is shown that the set of operators with the $\BS$ property defined on a real finite dimensional strictly convex Banach space $X$ is always dense in $\Lin(X;X)$. After that Paul, Sain and Ghosh found a useful theorem in \cite{PSG}, and we introduce here for its importance.

\begin{theorem}\cite[Theorem 2.2]{PSG} \label{theorem:real-BS}
Let $X$ and $Y$ be \textbf{real} Banach spaces, and $T \in \Lin(X;Y)$ be such that $M_T = C \cup -C$ for some nonempty compact connected $C \subset X$. Suppose that $\sup_{x \in D} \|Tx\| < \|T\|$ whenever $D$ is a closed subset of $S_X$ with $\dist(M_T,D)>0$. Then, for any $S \in \Lin(X;Y)$, $T \perp_B S$ if and only if there exists $x_0 \in M_T$ such that $Tx_0 \perp_B Sx_0$.
\end{theorem}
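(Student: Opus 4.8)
The plan is to treat the nontrivial implication by contradiction; the reverse one is exactly the elementary remark made just before the definition of the $\BS$ property: if $x_0\in M_T$ and $Tx_0\perp_B Sx_0$, then $\|T\|=\|Tx_0\|\le\|Tx_0+\lambda Sx_0\|\le\|T+\lambda S\|$ for every $\lambda\in\R$, so $T\perp_B S$. For the converse, assume $T\perp_B S$ while $Tx\not\perp_B Sx$ for every $x\in M_T$, and aim to produce $\lambda>0$ with $\|T+\lambda S\|<\|T\|$, contradicting $T\perp_B S$. (If $T=0$ or $S=0$ the statement is trivial, so assume both are nonzero.)

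The first step --- and the only place where the real scalar field is used essentially --- is a one-sided descent dichotomy. For $x\in M_T$ the function $g_x(\lambda):=\|(T+\lambda S)x\|$ is convex on $\R$ with $g_x(0)=\|T\|$, so $J_x:=\{\lambda\in\R:g_x(\lambda)<\|T\|\}$ is an open interval avoiding $0$; by our assumption $J_x\neq\emptyset$, hence $J_x\subset(0,\infty)$ or $J_x\subset(-\infty,0)$. Let $C^+$ and $C^-$ be the corresponding subsets of $C$. Since the maps $x\mapsto g_x(\lambda)$ are continuous for each fixed $\lambda$, $C^+$ and $C^-$ are relatively open in $C$, and they partition $C$; connectedness of $C$ forces one of them to be empty, and after replacing $S$ by $-S$ if necessary (all hypotheses and both sides of the desired equivalence are insensitive to the sign of $S$) we may assume $C=C^+$. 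As $g_{-x}=g_x$, the points of $-C$ are of the same type, so for every $x\in M_T$ there is $\lambda_x>0$ with $g_x(\lambda_x)<\|T\|$.

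The second step upgrades this to a uniform descent, at a single value of $\lambda$, on an entire neighborhood of $M_T$. By convexity $g_x<\|T\|$ on all of $(0,\lambda_x]$, and since $y\mapsto g_y(\lambda_x)$ is continuous, $x$ has a neighborhood on which $g_\cdot(\lambda_x)<\|T\|$; extracting a finite subcover of the compact set $M_T=C\cup(-C)$ produces $\eta>0$ with $g_x(\eta)<\|T\|$ for all $x\in M_T$, and then compactness and continuity give $\gamma>0$ with $\|(T+\eta S)x\|\le\|T\|-\gamma$ on $M_T$. A triangle-inequality estimate now fixes a radius $\eps_0:=\gamma/(2\|T+\eta S\|)$, \emph{independent of $\lambda$}, for which $\|(T+\eta S)x\|\le\|T\|-\gamma/2$ whenever $\dist(x,M_T)<\eps_0$; interpolating against $g_x(0)=\|T\|$ by convexity yields $\|(T+\lambda S)x\|\le\|T\|-\lambda\gamma/(2\eta)$ for all such $x$ and all $\lambda\in(0,\eta]$.

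The final step invokes the separation hypothesis exactly once, with this fixed $\eps_0$. The set $D:=\{x\in S_X:\dist(x,M_T)\ge\eps_0\}$ is closed and satisfies $\dist(M_T,D)\ge\eps_0>0$, so $\delta_0:=\|T\|-\sup_{x\in D}\|Tx\|>0$ and hence $\|(T+\lambda S)x\|\le\|T\|-\delta_0+\lambda\|S\|$ for $x\in D$. Choosing $\lambda:=\tfrac12\min\{\eta,\delta_0/\|S\|\}$ makes $\|(T+\lambda S)x\|<\|T\|$ with a uniform gap on both $D$ and $S_X\setminus D$ (for the latter use the neighborhood estimate), so $\|T+\lambda S\|<\|T\|$, the contradiction sought. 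The one delicate point is precisely the ordering of the last two steps: a direct near/far partition of $S_X$ would force the ``near'' radius to shrink with $\lambda$ while the hypothesis only supplies a gap that also shrinks with that radius, with no control on the rates; securing a uniform descent at a single $\eta$ first, extending it to a $\lambda$-independent radius $\eps_0$, and only afterwards calling on the hypothesis is what closes the estimate. The remainder is routine convexity and compactness bookkeeping.
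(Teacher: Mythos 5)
Your argument is correct, but there is nothing in the paper to compare it against: Theorem~\ref{theorem:real-BS} is quoted from \cite[Theorem 2.2]{PSG} and used as a black box, with no proof supplied. What you have reconstructed is essentially the original argument of Paul, Sain and Ghosh: the convexity dichotomy ($J_x\subset(0,\infty)$ or $J_x\subset(-\infty,0)$), the relative openness of the two resulting pieces of $C$ together with connectedness to kill one of them, then compactness of $M_T$ and a single application of the separation hypothesis at a $\lambda$-independent radius. This is precisely the mechanism the authors allude to when they remark that the construction ``strongly depends on the disconnectedness of two specific partitions,'' and it is why the result is confined to real scalars: over $\C$ the set $\{\lambda : g_x(\lambda)<\|T\|\}$ is a convex planar set avoiding the origin, so no two-piece dichotomy is available. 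The paper's own substitute, Proposition~\ref{prop:one-point-BS}, sidesteps connectedness entirely by assuming $M_A$ is a single $\mathbb{T}^N$-orbit and running a sequential contradiction argument, which is what lets it cover the complex case at the price of a stronger hypothesis on $M_A$. Two cosmetic points in your write-up: for $x$ merely within $\eps_0$ of $M_T$ one has $g_x(0)=\|Tx\|\leq\|T\|$ rather than equality (the convexity interpolation goes through unchanged with the inequality), and you should dispose in a line of the degenerate cases $D=\emptyset$, $S=0$ and $T+\eta S=0$; none of these affects the validity of the proof.
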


With the aid of the above result, one may find a way to measure the denseness of operators with the $\BS$ property in many cases. Kim proved that some conditions on pairs of Banach spaces ensure the denseness of operators with the $\BS$ property such as when $X$ has the Radon-Nikod\'ym property (in short, RNP) \cite{K}. This is a generalization of the result in \cite{SPH} since it is known that every finite dimensional space has the RNP. Later, Kim and Lee \cite{KL} complemented the result  with the case when $X$ has so-called property $\alpha$ or when $\BS(X;\R)$ is dense in $\Lin(X;\R)$ and $Y$ has property $\beta$.

The main goal of Section 2 is to generalize and strengthen aforementioned results in \cite{K,KL} for multilinear maps. Similarly to the case of operators, it is clear that $A \in \NA(X_1,\ldots,X_N;Y)$ and $B \in \Lin^N(X_1,\ldots,X_N;Y)$ satisfy $A\perp_B B$ if $A(x_1,\ldots,x_N)\perp_B B(x_1,\ldots,x_N)$ for some $(x_1,\ldots,x_N)\in M_A$. What we are going to see is whether the converse holds or not.

\begin{definition}
An $N$-linear map $A \in \Lin^N(X_1,\ldots,X_N;Y)$ is said to have the \emph{$\BS$ property} if for any $B \in \Lin^N(X_1,\ldots,X_N;Y)$ with $A \perp_B B$, there exists $(x_1,\ldots,x_N) \in M_A$ such that  $A(x_1,\ldots,x_N) \perp_B B(x_1,\ldots,x_N)$, and we write by $A \in \BS(X_1,\ldots,X_N;Y)$ in this case.
\end{definition}

%In the above definition, if $A$ is a non-zero bilinear form, then $A(x_1,...,x_N) \perp_B B(x_1,...,x_N)$ is equivalent to $B(x_1,...,x_N)=0$ from the characterization of James.

 We observe first that an $N$-linear map $A \in \Lin^N(X_1,\ldots,X_N;Y)$ can be identified by an operator $T_A \in \Lin(X_1; \Lin^{N-1}(X_2,\ldots,X_N;Y))$ with the canonical isometric relation $A(x_1,\ldots,x_N) = (T_Ax_1)(x_2,\ldots,x_N)$, and so we have that $A,B \in \Lin^N(X_1,\ldots,X_N;Y)$ satisfy $A\perp_B B$ if and only if $T_A\perp_B T_B$. From this identification, for a finite dimensional complex Hilbert space $H$, we rewrite the result of Bhatia and \v{S}emrl in terms of bilinear forms as follows. 
$$\text{For every~}A,B \in \Lin^2(H,H;\mathbb{C}),~A\perp_B B \text{~if and only if~} A(x_1,x_2)\perp_B B(x_1,x_2) \text{~for some~}(x_1,x_2)\in M_A.$$

Indeed, the result of Bhatia and \v{S}emrl \cite{BS} says that $T_Ax_1\perp_B T_Bx_1$  for some $x_1\in M_{T_A}$. Since Hilbert spaces are isometrically isomorphic to their dual spaces, there exists $x_2 \in S_H (=S_{H^{**}})$ such that $x_2(T_Ax_1)=\|T_A x_1\|$ and $x_2(T_Bx_1)=0$ by the characterization of James \cite{J}. This shows that $(x_1,x_2)\in M_A$ and $A(x_1,x_2)\perp_B B(x_1,x_2)$. In this argument, in fact, we used only the facts that every operator $T\in \mathcal{L}(H,H^*)$ has the $\BS$ property and that $H^*$ is reflexive. Hence, we deduce the following.

\begin{lemma}\label{easylemma} For Banach spaces $X$ and $Y$, if $Y$ is reflexive, then
$A \in \Lin^2(X,Y;\mathbb{K})$ has the $\BS$ property if and only if the corresponding operator $T_A \in \Lin(X;Y^*)$ has the $\BS$ property.
\end{lemma}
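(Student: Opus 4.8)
The plan is to exploit the canonical identification $A(x_1,x_2)=(T_Ax_1)(x_2)$ together with the isometric correspondence $A\perp_B B \iff T_A\perp_B T_B$ already recorded in the excerpt, reducing everything to a statement about when a witnessing point $x_1\in S_X$ for $T_A$ can be upgraded to a witnessing pair $(x_1,x_2)\in S_X\times S_Y$ for $A$. Concretely, I would prove the two implications separately.

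For the forward direction, suppose $A$ has the $\BS$ property and let $S\in\Lin(X;Y^*)$ satisfy $T_A\perp_B S$. The operator $S$ corresponds to a bilinear form $B\in\Lin^2(X,Y;\mathbb{K})$ via $B(x_1,x_2)=(Sx_1)(x_2)$, and by the identification $T_A\perp_B S=T_B$ gives $A\perp_B B$; hence there is $(x_1,x_2)\in M_A$ with $A(x_1,x_2)\perp_B B(x_1,x_2)$. Here $M_A=\{(x_1,x_2)\in S_X\times S_Y\colon |(T_Ax_1)(x_2)|=\|A\|\}$, so $x_1\in M_{T_A}$ (since $\|T_Ax_1\|\ge|(T_Ax_1)(x_2)|=\|A\|=\|T_A\|$) and moreover $x_2$ attains the norm of the functional $T_Ax_1\in Y^*$ at the point $x_2\in S_Y$. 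The scalar orthogonality $A(x_1,x_2)\perp_B B(x_1,x_2)$ in $\mathbb{K}$ simply says $A(x_1,x_2)\ne 0$ forces $B(x_1,x_2)=0$ (if $A(x_1,x_2)=0$ then $\|A\|=0$ and the conclusion is trivial), i.e. $(Sx_1)(x_2)=0$. Now I use reflexivity of $Y$: by James' characterization, since $x_2$ attains $\|T_Ax_1\|$ and kills $Sx_1$, the point $x_2$ viewed in $S_{Y^{**}}=S_Y$ is a norming functional witnessing $T_Ax_1\perp_B Sx_1$. Thus $T_A$ has the $\BS$ property.

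For the converse, suppose $T_A\in\Lin(X;Y^*)$ has the $\BS$ property and let $B\in\Lin^2(X,Y;\mathbb{K})$ satisfy $A\perp_B B$, i.e. $T_A\perp_B T_B$. Pick $x_1\in M_{T_A}$ with $T_Ax_1\perp_B T_Bx_1$ in $Y^*$. By James' characterization of Birkhoff--James orthogonality in $Y^*$, there is $\phi\in S_{Y^{**}}$ with $\phi(T_Ax_1)=\|T_Ax_1\|=\|A\|$ and $\phi(T_Bx_1)=0$; reflexivity of $Y$ lets me realize $\phi$ as some $x_2\in S_Y$, whence $|(T_Ax_1)(x_2)|=\|A\|$ and $(T_Bx_1)(x_2)=0$. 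The first equality gives $(x_1,x_2)\in M_A$ and the second gives $B(x_1,x_2)=0$, so $A(x_1,x_2)\perp_B B(x_1,x_2)$, which is exactly the $\BS$ property for $A$.

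The only genuine subtlety—hardly an obstacle—is the bookkeeping around James' characterization: in one direction I need it in $Y$ (to say ``$x_2$ norms $T_Ax_1$ and kills $Sx_1$'' $\Rightarrow$ ``$T_Ax_1\perp_B Sx_1$'') and in the other direction I need it in $Y^*$ (to turn $T_Ax_1\perp_B T_Bx_1$ into a norming element of $Y^{**}$), and reflexivity of $Y$ is exactly what makes the two uses compatible by identifying $Y$ with $Y^{**}$. I would also dispatch the degenerate case $\|A\|=0$ at the outset, since then $M_A=S_X\times S_Y$ and every orthogonality condition holds vacuously, so both sides of the equivalence are trivially true.
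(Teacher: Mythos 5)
Your proof is correct and follows essentially the same route as the paper's (whose argument is the paragraph preceding the lemma): identify $A$ with $T_A$, note that Birkhoff--James orthogonality is preserved under this identification, and use James's characterization together with reflexivity of $Y$ to pass between a witnessing pair $(x_1,x_2)\in M_A$ and a witnessing point $x_1\in M_{T_A}$ with a norming element of $S_{Y^{**}}=S_Y$. The paper only spells out the direction from $T_A$ to $A$; you supply both, with the only cosmetic omission being the unimodular rotation of $x_2$ needed so that it norms $T_Ax_1$ exactly rather than in modulus.
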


 As a consequence of the equivalence, we obtain a positive result by applying \cite[Corollary 3.5]{KL} with the fact that $\ell_\infty^n$ is a reflexive space having property $\beta$.
\begin{cor}
Let $X$ be a locally uniformly convex Banach space and $n \in \N$. Then, $\BS(X,\ell_1^n;\mathbb{\mathbb{K}})$ is dense in $\Lin^2(X,\ell_1^n;\mathbb{K})$.
\end{cor}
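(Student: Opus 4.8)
The plan is to move everything to the operator setting via Lemma~\ref{easylemma} and then quote the operator density theorem of Kim and Lee. First, since $\ell_1^n$ is finite dimensional it is reflexive, with $(\ell_1^n)^* = \ell_\infty^n$ isometrically; so Lemma~\ref{easylemma} applies with $Y = \ell_1^n$ and says that $A \in \Lin^2(X,\ell_1^n;\mathbb{K})$ has the $\BS$ property exactly when the associated operator $T_A \in \Lin(X;\ell_\infty^n)$ has it. The assignment $A \mapsto T_A$ is the canonical surjective linear isometry from $\Lin^2(X,\ell_1^n;\mathbb{K})$ onto $\Lin(X;\ell_\infty^n)$, and the equivalence just recalled says precisely that it carries $\BS(X,\ell_1^n;\mathbb{K})$ onto $\BS(X;\ell_\infty^n)$. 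Since a surjective linear isometry preserves denseness, the statement ``$\BS(X,\ell_1^n;\mathbb{K})$ is dense in $\Lin^2(X,\ell_1^n;\mathbb{K})$'' is equivalent to ``$\BS(X;\ell_\infty^n)$ is dense in $\Lin(X;\ell_\infty^n)$''.

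Second, I would check that $\ell_\infty^n$ meets the requirements placed on the range space in \cite[Corollary~3.5]{KL}: it is reflexive, and it has property~$\beta$. For the latter, take the canonical unit vectors $e_1,\dots,e_n \in S_{\ell_\infty^n}$ and the coordinate functionals $e_1^*,\dots,e_n^* \in S_{(\ell_\infty^n)^*}$; then $e_i^*(e_i) = 1$, $e_i^*(e_j) = 0$ for $i \neq j$, and $\|y\|_\infty = \max_i |e_i^*(y)|$ for every $y \in \ell_\infty^n$, so property~$\beta$ holds with separation constant $0$. With the locally uniformly convex space $X$ as domain and $\ell_\infty^n$ as range, \cite[Corollary~3.5]{KL} now yields the denseness of $\BS(X;\ell_\infty^n)$ in $\Lin(X;\ell_\infty^n)$, and together with the first step this proves the corollary. (The finiteness of $n$ is essential: for $\ell_1$ itself one loses reflexivity of the range $Y = \ell_1$, so Lemma~\ref{easylemma} no longer applies.)

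I do not anticipate a genuine difficulty: the content of the corollary resides entirely in Lemma~\ref{easylemma} and in the cited operator theorem, and the proof amounts to the remark that the spaces $\ell_1^n$ are exactly the ones whose duals $\ell_\infty^n$ are reflexive spaces with property~$\beta$. The one point deserving a line of care is matching the quoted hypotheses to the present situation --- confirming that \cite[Corollary~3.5]{KL} is stated for a domain as general as a locally uniformly convex space and a range as concrete as $\ell_\infty^n$ --- and observing that denseness transfers across the identification $A \mapsto T_A$, which it does because that map is a surjective linear isometry intertwining the two $\BS$ classes.
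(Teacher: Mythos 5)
Your argument is the same as the paper's: transfer the problem to $\Lin(X;\ell_\infty^n)$ via Lemma~\ref{easylemma} (using reflexivity of $\ell_1^n$ and $(\ell_1^n)^*=\ell_\infty^n$), note that the identification $A\mapsto T_A$ is a surjective isometry matching the two $\BS$ classes, verify property~$\beta$ for $\ell_\infty^n$, and quote \cite[Corollary~3.5]{KL}. All of those steps are fine, and your property-$\beta$ verification with the coordinate vectors and functionals is correct.

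There is, however, one genuine gap, and it is exactly the point the paper flags in the remark immediately following this corollary: \cite[Corollary~3.5]{KL} is proved only for \emph{real} Banach spaces, whereas the statement is asserted over $\mathbb{K}=\R$ or $\C$. Your proof as written therefore establishes only the case $\mathbb{K}=\R$; you anticipated the need to ``match the quoted hypotheses,'' but the hypothesis that fails to match is the scalar field, not the choice of domain or range. The paper's own resolution is that the complex case follows from the stronger results it proves later in Section~\ref{section:denseness} --- in particular Corollary~\ref{cor:LUR-beta} (locally uniformly convex domain, range with property quasi-$\beta$, valid over $\mathbb{K}$), which combined with your reduction closes the gap. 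So to make the proof complete for complex scalars you should replace the citation of \cite[Corollary~3.5]{KL} by an appeal to Corollary~\ref{cor:LUR-beta}, or equivalently to the quasi-$\beta$ proposition of Section~\ref{section:denseness} together with the observation that $\BS(X;\mathbb{K})$ is dense when $X$ is locally uniformly convex over either field.
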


In the above statement, it should be written by $\mathbb{R}$ instead of $\mathbb{K}$ since \cite[Corollary 3.5]{KL} is proved for real spaces. However, in Section 2 we prove a stronger statement for complex spaces, and this is why we put $\mathbb{K}$. We also comment that the version of Lemma \ref{easylemma} for a non-reflexive $Y$ does not hold. Since $\Lin^2(\mathbb{K},Y;\mathbb{K})$ can be identified with $\mathcal{L}(Y;\mathbb{K})$ and there exists an operator in $\mathcal{L}(Y;\mathbb{K})$ without the $\BS$ property, we can find $A \in \Lin^2(\mathbb{K},Y;\mathbb{K})$ without the $\BS$ property whenever $Y$ is non-reflexive. However, it is clear that the corresponding operator $T_A$ belongs to $\BS(\mathbb{K}; Y^*)$.

We remark that previous results in \cite{K,KL} are only valid for real cases, as the construction of $C$ in Theorem \ref{theorem:real-BS} strongly depends on the disconnectedness of two specific partitions. We present a similar result to Theorem \ref{theorem:real-BS} in Section \ref{section:denseness} which is slightly weaker but which also covers the complex case, and deduce some denseness results.

On the opposite hand, there are still many examples which shows that $\BS(X;Y)$ is a very small set compared to $\Lin(X;Y)$. As shown in \cite{K}, there is no nontrivial operator with the $\BS$ property in $\Lin(c_0;Y)$ when $Y$ is strictly convex and moreover that $\BS(c_0;c_0)$ is not dense in $\Lin(c_0;c_0)$. In \cite{KL}, the authors proved that $\BS(L_1[0,1];Y)=\{0\}$ for an arbitrary Banach space $Y$. It is a very intriguing result as there are many range spaces $Y$ such that $\NA(L_1[0,1];Y)$ is dense in $\Lin(L_1[0,1];Y)$ on the contrary. In Section \ref{section:c0}, we strengthen the result on $c_0$. More precisely, we show first that there are plenty of Banach spaces $Y$ such that $\BS(c_0;Y)$ only consists of the zero operator, and secondly a quite stunning result that $\BS(c_0;Y)$ is never dense in $\Lin(c_0;Y)$ for every Banach space $Y$. This eventually gives rise to producing a negative result on the denseness of multilinear maps when one of the domain space is $c_0$.

\section{Denseness of $N$-linear maps with the Bhatia-\v{S}emrl property}\label{section:denseness}

We provide in this section many positive examples of tuples $(X_1,X_2,\ldots,X_N,Y)$ of Banach spaces such that the set of $N$-linear maps with the $\BS$ property is dense in $\Lin^N(X_1,\ldots,X_N;Y)$. As many previous results for operators rely on Theorem \ref{theorem:real-BS}, it is inevitable to compromise Theorem \ref{theorem:real-BS} also to cover the complex case, and a stronger assumption will help to handle those situations. In the followings, $\mathbb{T}^N$ denotes the $N$ product of unit spheres of the scalar field $\mathbb{K}$.

%%, and for a fixed point $x_0\in X$ the notion  $\mathbb{T} x_0$ denotes $\{ax_0:a\in \mathbb{T}\}$.

\begin{prop}\label{prop:one-point-BS}
Let $X_1, \ldots, X_N$ and $Y$ be Banach spaces. Let $A \in \Lin^N(X_1, \ldots, X_N;Y)$ be such that $M_A =  \{( \theta^1 x_0^1, \ldots, \theta^N x_0^N) \in S_{X_1} \times \cdots \times S_{X_N} \colon (\theta^1, \ldots, \theta^N) \in \mathbb{T}^N \}$ for some $(x_0^1, \ldots, x_0^N) \in S_{X_1} \times \cdots \times S_{X_N}$. Suppose that $\sup_{(x^1,\ldots,x^N) \in D} \|A(x^1, \ldots, x^N)\| < \|A\|$ whenever $D$ is a closed subset of $S_{X_1} \times \cdots \times S_{X_N}$ with $\dist(M_A,D)>0$. Then, for any $B \in \Lin^N(X_1, \dots, X_N;Y)$, $A \perp_B B$ if and only if $A(x_0^1,\ldots,x_0^N) \perp_B B(x_0^1, \ldots, x_0^N)$.
\end{prop}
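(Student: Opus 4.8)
The plan is to settle the nontrivial implication by a direct perturbation argument that contradicts the defining inequality of $A\perp_B B$, using only the definition of the Birkhoff--James orthogonality. The easy implication is immediate: if $A(x_0^1,\ldots,x_0^N)\perp_B B(x_0^1,\ldots,x_0^N)$, then for every $\lambda\in\mathbb{K}$ one has $\|A+\lambda B\|\geq\|(A+\lambda B)(x_0^1,\ldots,x_0^N)\|=\|A(x_0^1,\ldots,x_0^N)+\lambda B(x_0^1,\ldots,x_0^N)\|\geq\|A(x_0^1,\ldots,x_0^N)\|=\|A\|$, whence $A\perp_B B$. So I assume $A\perp_B B$ and, towards a contradiction, that there is $\lambda_0\in\mathbb{K}$ with $\|A(x_0^1,\ldots,x_0^N)+\lambda_0 B(x_0^1,\ldots,x_0^N)\|<\|A(x_0^1,\ldots,x_0^N)\|=\|A\|$; put $\delta:=\|A\|-\|A(x_0^1,\ldots,x_0^N)+\lambda_0 B(x_0^1,\ldots,x_0^N)\|>0$ (this forces both $A\neq 0$ and $B\neq 0$). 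The goal is to produce a single $t_0\in(0,1]$ with $\|A+t_0\lambda_0 B\|<\|A\|$, contradicting $\|A\|\leq\|A+\lambda B\|$ applied to $\lambda=t_0\lambda_0$.

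First I would record that $M_A$ is compact, being the continuous image of $\mathbb{T}^N$ under $(\theta^1,\ldots,\theta^N)\mapsto(\theta^1 x_0^1,\ldots,\theta^N x_0^N)$. By $N$-linearity, at each point of $M_A$ we have $A(\theta^1 x_0^1,\ldots,\theta^N x_0^N)+\lambda_0 B(\theta^1 x_0^1,\ldots,\theta^N x_0^N)=\theta^1\cdots\theta^N\big(A(x_0^1,\ldots,x_0^N)+\lambda_0 B(x_0^1,\ldots,x_0^N)\big)$ with $|\theta^1\cdots\theta^N|=1$, so the continuous function $h(x^1,\ldots,x^N):=\|A(x^1,\ldots,x^N)+\lambda_0 B(x^1,\ldots,x^N)\|$ is identically equal to $\|A\|-\delta$ on $M_A$. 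Covering $M_A$ by finitely many relatively open balls on which $h<\|A\|-\delta/2$ (possible by continuity of $h$ and compactness of $M_A$) produces an open neighborhood $V$ of $M_A$ in $S_{X_1}\times\cdots\times S_{X_N}$ with $h<\|A\|-\delta/2$ on $V$. Crucially, $V$ is built \emph{before} any small parameter is introduced.

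Next I would split $S_{X_1}\times\cdots\times S_{X_N}$ into $V$ and its relatively closed complement $D$. Since $M_A$ is compact and $D$ is closed and disjoint from $M_A$, $\dist(M_A,D)>0$, so the hypothesis of the proposition gives $\eta>0$ with $\|A(x^1,\ldots,x^N)\|\leq\|A\|-\eta$ for all $(x^1,\ldots,x^N)\in D$. The key algebraic step is the convexity splitting $A(x)+t\lambda_0 B(x)=(1-t)A(x)+t\big(A(x)+\lambda_0 B(x)\big)$, valid for $t\in[0,1]$, which yields $\|A(x)+t\lambda_0 B(x)\|\leq(1-t)\|A(x)\|+t\,h(x)$. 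On $V$ this gives $\|A(x)+t\lambda_0 B(x)\|\leq(1-t)\|A\|+t(\|A\|-\delta/2)=\|A\|-t\delta/2$, while on $D$ a crude estimate gives $\|A(x)+t\lambda_0 B(x)\|\leq\|A(x)\|+t|\lambda_0|\,\|B\|\leq\|A\|-\eta+t|\lambda_0|\,\|B\|$. Choosing $t_0:=\min\{1,\ \eta/(2|\lambda_0|\,\|B\|)\}$ makes both right-hand sides strictly smaller than $\|A\|$, hence $\|A+t_0\lambda_0 B\|=\sup_{x\in S_{X_1}\times\cdots\times S_{X_N}}\|(A+t_0\lambda_0 B)(x)\|<\|A\|$, the desired contradiction.

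The step I expect to require the most care is obtaining a neighborhood of $M_A$ on which the perturbed norm stays below $\|A\|$ \emph{uniformly in the small parameter $t$}: a bound derived only from convexity at the single point $(x_0^1,\ldots,x_0^N)$ degrades like $\|A\|-t\delta$ and would force the neighborhood to shrink with $t$, clashing with the fixed gap $\eta$ available on $D$. The remedy is exactly to pass through the auxiliary function $h$, which is $\leq\|A\|-\delta$ on \emph{all} of $M_A$; compactness of $M_A$ — which the torus-orbit description of $M_A$ supplies — is what upgrades this pointwise bound to an open-neighborhood bound and what guarantees $\dist(M_A,D)>0$. This is the precise point where our hypothesis is stronger than, and replaces, the connectedness/disconnectedness mechanism used in the real-scalar Theorem~\ref{theorem:real-BS}.
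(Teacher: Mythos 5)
Your proof is correct. It rests on the same two mechanisms as the paper's own argument --- the convexity identity $A+t\lambda_0 B=(1-t)A+t\bigl(A+\lambda_0 B\bigr)$ near $M_A$, and the gap hypothesis combined with a crude triangle inequality away from $M_A$ --- but it organizes them differently. The paper extracts, for each $j$, a near-maximizer of $\|A+(\lambda_0/j)B\|$ and then shows this sequence must eventually stay at positive distance from $M_A$; the convexity identity appears only inside a sub-argument ruling out a subsequence converging into $M_A$, and the final contradiction comes from comparing the near-maximizer property with the resulting gap $\eps$ at one large index $M$. You instead fix everything up front: compactness of the torus orbit $M_A$ together with the constancy of $\|A(\cdot)+\lambda_0 B(\cdot)\|$ on it yields an open neighborhood $V$ with a uniform drop $\delta/2$; the relatively closed complement $D$ automatically satisfies $\dist(M_A,D)>0$ (closed versus compact), so the hypothesis supplies $\eta$; and a single $t_0$ then gives the strict inequality $\|A+t_0\lambda_0 B\|<\|A\|$, contradicting $A\perp_B B$ directly from the definition. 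Your version is arguably cleaner: it avoids the subsequence extraction, makes explicit where compactness of $M_A$ and the $\mathbb{T}^N$-orbit structure are actually used, and produces one concrete perturbation witnessing the failure of orthogonality rather than a limiting contradiction. Both proofs invoke the hypothesis on closed sets $D$ in exactly the same way, so neither is more general than the other.
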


\begin{proof}
Since the `if' part is evident, suppose that $A \perp_B B$ and there exists $\lam_0 \in \mathbb{K}$ such that $\|A(x_0^1,\ldots,x_0^N)\| > \|A(x_0^1,\ldots,x_0^N) + \lam_0 B (x_0^1,\ldots,x_0^N) \|$. By assumption, we can choose a sequence $\{(x_j^1,\ldots,x_j^N)\}_{j=1}^\infty \subset S_{X_1} \times \cdots \times S_{X_N}$ so that
$$
\left\| A(x_j^1, \ldots, x_j^N) + \frac{\lam_0}{j} S(x_j^1,\ldots,x_j^N) \right\| \geq \|A\| - \frac{1}{j^2} \qquad \text{for each } j \in \N.
$$
For $D_n := \overline{\{(x_j^1,\ldots,x_j^N)\}_{j=n}^{\,\infty}} \subset S_{X_1} \times \cdots \times S_{X_N}$, if there exist only finite number of indices $j_1< \cdots< j_k$ such that $(\theta_i^1x_{j_i}^1, \ldots,\theta_i^N x_{j_i}^N) = (x_0^1,\ldots,x_0^N)$ for some $(\theta_i^1,\ldots,\theta_i^N) \in \mathbb{T}^N$ for each $1 \leq i \leq k$ then we take $m=j_k+1$, and $m=1$ otherwise.  We now show that $\dist(M_A,D_m)>0$. Otherwise, there is a subsequence $\{(x_{\sigma(j)}^1,\ldots,x_{\sigma(j)}^N)\}$ such that $(x_{\sigma(j)}^1,\ldots,x_{\sigma(j)}^N)$ converges to $(\theta_0^1 x_0^1,\ldots,\theta_0^N x_0^N)$ for some $(\theta_0^1,\ldots,\theta_0^N) \in \mathbb{T}^N$. It follows that
\begin{align*}
\|A\| &= \|A(x_0^1,\ldots,x_0^N)\| \\
&> \|A(x_0^1,\ldots,x_0^N) + \lam_0 B(x_0^1,\ldots,x_0^N)\| \\
& = \lim_{j \to \infty} \left\| A(x_{\sigma(j)}^1,\ldots,x_{\sigma(j)}^N) + \lam_0 B(x_{\sigma(j)}^1,\ldots,x_{\sigma(j)}^N) \right\| \\
& \geq \lim_{j \to \infty} \left( \sigma(j) \left\|A(x_{\sigma(j)}^1,\ldots,x_{\sigma(j)}^N) + \frac{\lam_0}{\sigma(j)} B(x_{\sigma(j)}^1,\ldots,x_{\sigma(j)}^N) \right\| - (\sigma(j)-1) \|A(x_{\sigma(j)}^1,\ldots,x_{\sigma(j)}^N)\| \right) \\
& \geq \lim_{j \to \infty} \left( \sigma(n) \left( \|A\| - \frac{1}{\sigma(j)^2}\right) - (\sigma(j)-1) \|A\| \right) \\
&= \lim_{j \to \infty} \left( \|A\| - \frac{1}{\sigma(j)} \right) \\
& = \|A\|,
\end{align*}
which is a contradiction.
 
Hence, we get
$$
\eps := \|A\| -\sup_{j\geq m} \|A(x_j^1,\ldots,x_j^N)\| > 0.
$$
Take $M \in \N$ so that $M > \max \biggl\{m, \dfrac{2}{\eps}|\lam_0| \|B\| , \sqrt{\dfrac{2}{\eps}} \biggr\}$, then we have the following desired contradiction
\begin{align*}
\|A\|-\frac{\eps}{2} < \|A\| - \frac{1}{M^2} &\leq \|A(x_M^1,\ldots,x_M^N) + \frac{\lam_0}{M} B(x_M^1,\ldots,x_M^N)\| \\
&\leq \|A(x_M^1,\ldots,x_M^N)\| + \frac{|\lam_0|}{M} \|B(x_M^1,\ldots,x_M^N)\| \\
&\leq (\|A\|-\eps) + \frac{\eps}{2} \\
&= \|A\| -\frac{\eps}{2}.
\end{align*}
\end{proof}

We now recall some concepts, namely property quasi-$\alpha$, as a sufficient condition on the domain spaces for the denseness of $N$-linear maps with the $\BS$ property. This is introduced in \cite{CS} as a sufficient condition for the  denseness of norm attaining operators. We note that property quasi-$\alpha$ is stricrtly weaker than property $\alpha$ and it is shown in \cite{KL} that $\BS(X;Y)$ is dense in $\Lin(X;Y)$ whenever $X$ has property $\alpha$ for real Banach spaces.

\begin{definition}
A Banach space $X$ is said to have \emph{property quasi-$\alpha$} if there exist an index set $I$, $\{x_\alpha\}_{\alpha \in I} \subset S_X$, $\{x_\alpha^*\}_{\alpha \in I} \subset S_{X^*}$, and $\lambda \colon \{x_\alpha\}_{\alpha \in I} \to \R$ satisfying that
\begin{enumerate}
\item[\textup{(i)}] $x_\alpha^*(x_\alpha)=1$ for all $\alpha \in I$,
\item[\textup{(ii)}] $|x_\alpha^*(x_\beta)| \leq \lambda(x_\alpha) < 1$ for all $\alpha, \beta \in I$ with $\alpha \neq \beta$,
\item[\textup{(iii)}] For every $e \in \operatorname{Ext}(B_{X^{**}})$, there exists $I_e \subset I$ such that $te \in \overline{\{x_\alpha\}}^{w^*}_{\alpha \in I_e}$ for some $t \in \mathbb{T}$ and $r_e = \sup \{ \lambda(x_\alpha) \colon \alpha \in I_e \} <1$.
\end{enumerate}
\end{definition}

Motivated by the proof of \cite[Theorem 2.1]{CS}, we show the following proposition.

\begin{prop}\label{alpha}
Let $X,X_1, \ldots, X_N$ and $Y$ be Banach spaces. If $X$ has property quasi-$\alpha$ and $\BS(X_1,\ldots,X_N;Y)$ is dense in $\Lin^N(X_1,\ldots,X_N;Y)$, then $\BS(X,X_1,\ldots,X_N;Y)$ is dense in $\Lin^{N+1}(X,X_1,\ldots,X_N;Y)$.
\end{prop}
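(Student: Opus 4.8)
The plan is to identify every $M\in\Lin^{N+1}(X,X_1,\ldots,X_N;Y)$ with the operator $T_M\in\Lin(X;Z)$, where $Z:=\Lin^{N}(X_1,\ldots,X_N;Y)$, through the isometric relation $M(x,x^1,\ldots,x^N)=(T_Mx)(x^1,\ldots,x^N)$, so that $M\perp_B M'$ if and only if $T_M\perp_B T_{M'}$. Given $A$ with $\|A\|=1$ and $\eps>0$, I would build $\tilde A\in\Lin^{N+1}(X,X_1,\ldots,X_N;Y)$ with $\|\tilde A-A\|<\eps$ whose associated operator $T:=T_{\tilde A}$ has, for a suitable $x_0\in S_X$ with $\|Tx_0\|=\|T\|$, two features: first, $M_T=\{\theta x_0\colon\theta\in\mathbb{T}\}$ and $\sup_{x\in D}\|Tx\|<\|T\|$ for every closed $D\subset S_X$ with $\dist(M_T,D)>0$, i.e.\ $T$ satisfies the hypotheses of Proposition~\ref{prop:one-point-BS} in the single variable $X$ with range $Z$; second, $Tx_0\in\BS(X_1,\ldots,X_N;Y)$ when regarded as an element of $Z$.

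Granting such a $\tilde A$, the conclusion is immediate: if $B\in\Lin^{N+1}(X,X_1,\ldots,X_N;Y)$ satisfies $\tilde A\perp_B B$, then $T\perp_B T_B$ in $\Lin(X;Z)$, so by Proposition~\ref{prop:one-point-BS} with $N=1$ and the first feature we get $Tx_0\perp_B T_Bx_0$ in $Z$; by the second feature $Tx_0\in\BS(X_1,\ldots,X_N;Y)$, so there is $(z^1,\ldots,z^N)\in M_{Tx_0}$ with $(Tx_0)(z^1,\ldots,z^N)\perp_B(T_Bx_0)(z^1,\ldots,z^N)$ in $Y$. Unwinding the identification, this reads $\tilde A(x_0,z^1,\ldots,z^N)\perp_B B(x_0,z^1,\ldots,z^N)$, and since $(z^1,\ldots,z^N)\in M_{Tx_0}$ and $\|Tx_0\|=\|T\|=\|\tilde A\|$ we have $(x_0,z^1,\ldots,z^N)\in M_{\tilde A}$. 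Thus $\tilde A\in\BS(X,X_1,\ldots,X_N;Y)$, and $\tilde A$ lies within $\eps$ of $A$.

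To construct $\tilde A$ I would follow the scheme of the proof of \cite[Theorem 2.1]{CS}. Condition (iii) of property quasi-$\alpha$ makes $\{x_\alpha^*\}_{\alpha\in I}$ norming for $X$ (each extreme point of $B_{X^{**}}$ is a $\mathbb{T}$-multiple of a weak-$*$ limit of the $x_\alpha$'s), whence $\|T_A\|=\sup_\alpha\|T_Ax_\alpha\|$. Using the weak-$*$ lower semicontinuity of $x\mapsto\|T_A^{**}x\|$, I would pick an extreme point $e$ of $B_{X^{**}}$ with $\|T_A^{**}e\|$ close to $\|T_A\|$ and then, since $te\in\overline{\{x_\alpha\}}^{\,w^*}_{\alpha\in I_e}$ and $r_e=\sup\{\lambda(x_\alpha)\colon\alpha\in I_e\}<1$, an index $\alpha_0\in I_e$ with $\|T_Ax_{\alpha_0}\|$ close to $\|T_A\|$; put $x_0:=x_{\alpha_0}$, $\phi:=x_{\alpha_0}^*$, so $|\phi(x_\beta)|\le\lambda(x_{\alpha_0})\le r_e<1$ for all $\beta\ne\alpha_0$. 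Before any norm-attainment perturbation, I would use the density of $\BS(X_1,\ldots,X_N;Y)$ in $Z$ to choose $w\in\BS(X_1,\ldots,X_N;Y)$ with $\|w-T_Ax_0\|$ as small as needed and, after rescaling, $\|w\|=\|T_Ax_0\|$, and replace $T_A$ by $x\mapsto T_Ax+\phi(x)(w-T_Ax_0)$, a modification of norm at most $\|w-T_Ax_0\|$ whose value at $x_0$ is $w$. Then I would run the quasi-$\alpha$ iteration of \cite[Theorem 2.1]{CS} but always boosting in the direction $\phi(\cdot)$, i.e.\ successively perturbing by small multiples $\delta_n\phi(\cdot)$ of the current value at $x_0$ with $\sum_n\delta_n$ small: every such boost leaves the value at $x_0$ a positive scalar multiple of $w$, hence still in $\BS(X_1,\ldots,X_N;Y)$ by scale invariance of the $\BS$ property, so in the limit one has $Tx_0=cw$ for some $c>0$, giving the second feature; and the uniform estimate $|\phi(x_\beta)|\le r_e<1$ forces the peak at $x_0$ to strictly dominate the values at the other $x_\beta$, hence, through the norming property, everywhere on $S_X$ off a neighbourhood of $\mathbb{T}x_0$, which yields the first feature.

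The hard part will be exactly this last assertion: making the limiting operator attain its norm \emph{precisely} on the single orbit $\mathbb{T}x_0$, with the quantitative strict-decrease estimate required by Proposition~\ref{prop:one-point-BS}, \emph{while simultaneously} keeping its value at $x_0$ a rescaling of the pre-chosen $\BS$-element. These two perturbations cannot be performed independently in sequence --- a $\BS$-replacement carried out \emph{after} the norm-attainment iteration would in general reintroduce near-ties at points away from $\mathbb{T}x_0$ and spoil the strict-decrease estimate --- so the $\BS$-correction must be locked into the direction $\phi(\cdot)$ along which all subsequent boosts act, and one then has to verify that the limit genuinely has the exact one-orbit structure with the required gap. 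That verification, the analogue for a range consisting of a space of $N$-linear maps of the iterative construction in \cite[Theorem 2.1]{CS}, together with the bookkeeping needed to keep the successive perturbations summing to less than $\eps$, is where essentially all of the work lies.
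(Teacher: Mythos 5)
Your reduction is exactly the paper's: identify $\Lin^{N+1}(X,X_1,\ldots,X_N;Y)$ with $\Lin(X;Z)$, produce a perturbation whose associated operator satisfies the hypotheses of Proposition~\ref{prop:one-point-BS} at a single orbit $\mathbb{T}x_{\alpha_0}$ and whose value at $x_{\alpha_0}$ lies in $\BS(X_1,\ldots,X_N;Y)$, and then chain the two orthogonality transfers. That part of your argument is correct. But the construction of the perturbation --- which you yourself flag as ``where essentially all of the work lies'' --- is left as an unverified iteration, and this is a genuine gap: you never establish that the limiting operator attains its norm exactly on $\mathbb{T}x_{\alpha_0}$ with the quantitative strict-decrease estimate that Proposition~\ref{prop:one-point-BS} requires, while keeping the value at $x_{\alpha_0}$ a multiple of the chosen $\BS$-element. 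The missing idea is that no iteration is needed. The paper uses a single rank-one perturbation with a multiplicative boost: having chosen $\alpha_0\in I_e$ with $\|T_Ax_{\alpha_0}\|>1-\delta$ and $U\in\BS(X_1,\ldots,X_N;Y)$ with $\|U\|=\|T_Ax_{\alpha_0}\|$ and $\|U-T_Ax_{\alpha_0}\|<\delta$, it sets
$$
S(\cdot):=T_A(\cdot)+\Bigl[\bigl(1+\tfrac{\eps}{2}\bigr)U-T_Ax_{\alpha_0}\Bigr]x_{\alpha_0}^*(\cdot),
$$
so that $Sx_{\alpha_0}=(1+\eps/2)U\in\BS(X_1,\ldots,X_N;Y)$ automatically, while for every $\alpha\neq\alpha_0$ the bound $|x_{\alpha_0}^*(x_\alpha)|\leq r_e<1$ gives $\|Sx_\alpha\|\leq 1+r_e(\eps/2+\delta)<(1+\eps/2)(1-\delta)<\|Sx_{\alpha_0}\|$ once $\delta$ is chosen so that $1+r_e(\eps/2+\delta)<(1+\eps/2)(1-\delta)$. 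The factor $1+\eps/2$ is what creates the uniform gap in one step; the quantitative decay $\|Sx\|\leq\|S\|-\gamma(\|S\|-(1+\eps/2)(1-\delta))/4$ for $x$ at distance $\gamma$ from $\mathbb{T}x_{\alpha_0}$ then follows by writing $x$ approximately as an absolutely convex combination $\sum c_{\alpha_i}x_{\alpha_i}$ and observing that $|c_{\alpha_0}|\leq 1-\gamma/2$. This is precisely the estimate your sketch defers.

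Two smaller points. First, your plan to pick an extreme point $e$ with $\|T_A^{**}e\|$ merely \emph{close} to $\|T_A\|$ is circular as stated: how close it must be is governed by $1-r_e$, which depends on $e$. The paper avoids this by first invoking Lindenstrauss to assume $(T_A)^{**}\in\NA(X^{**};Z^{**})$ and then Lima's theorem to get attainment at an extreme point, so that $e$ is fixed before $\delta$ is chosen. Second, your worry that a $\BS$-replacement performed after the norm-attainment step would reintroduce near-ties is well taken, but the resolution is not to interleave infinitely many corrections; it is to fold the replacement and the boost into the same single rank-one term, as above.
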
 

\begin{proof} Before proving the statement, we first see the canonical isometry shows that 
$$\Lin^{N+1}(X,X_1,\ldots,X_N;Y)=\Lin(X; \Lin^N(X_1,\ldots,X_N;Y)).$$

For convenience, we set $Z=\Lin^N(X_1,\ldots,X_N;Y)$ and $T_D\in \Lin(X;Z)$ to be the image of $D\in \Lin^{N+1}(X,X_1,\ldots,X_N;Y)$ by the isometry. 

We show that it is possible to approximate $A\in \Lin^{N+1}(X,X_1,\ldots,X_N;Y)$ by $B\in \BS(X,X_1,\ldots,X_N;Y)$. We here assume that $\|A\|=1$ without any loss of generality, and assume that $(T_A)^{**} \in \NA(X^{**};Z^{**})$ by Lindenstrauss (see \cite[Theorem 2]{L}). Let $\{x_\alpha\}_{\alpha \in I} \subset S_{X}$ and $\{x_\alpha^*\}_{\alpha \in I} \subset S_{X^*}$ be as in the definition of property quasi-$\alpha$. Since $(T_A)^{**}$ attains its norm at some $e \in \operatorname{Ext}(B_{X^{**}})$ (see \cite[Theorem 5.8]{Lima}), there exists an index set $I_e \subset I$ as in the definition. 

For arbitrary $\eps>0$, take $0<\delta<\dfrac{\eps}{2}$ so that
$$
1+r_{e}\left( \frac{\eps}{2} + \delta \right) <\left( 1 + \frac{\eps}{2} \right) (1-\delta) .
$$
As it is known that $te \in \overline{\{x_\alpha\}}^{w^*}_{\alpha \in I_e}$ for some $t \in \mathbb{T}$, we can choose $\alpha_0 \in I_e$ such that
$$
\|T_Ax_{\alpha_0}\|>\|T_A\|-\delta= 1-\delta.
$$
By the assumption that $\BS(X_1,\ldots,X_N;Y)$ is dense in $Z$, there exists $U \in \BS(X_1,\ldots,X_N;Y)$ with $\|U\|=\|T_Ax_{\alpha_0}\|$ such that $\|U-T_Ax_{\alpha_0}\|<\delta$. 

The operator $S \in \Lin(X;Z)$ defined by
$$
S(\cdot) := T_A(\cdot) + \left[ \left( 1+\frac{\eps}{2} \right) U - (T_Ax_{\alpha_0}) \right] x^*_{\alpha_0}(\cdot)
$$
 attains its norm at $x_{\alpha_0}$. Indeed, we have that $Sx_{\alpha_0} = \left(1 +\dfrac{\eps}{2}\right) U \in \BS(X_1,\ldots,X_N;Y)$ and so 
$$
\left\|Sx_{\alpha_0}\right\|> \left(1 +\frac{\eps}{2}\right)(1-\delta).
$$
On the other hand, for $\alpha \in I \setminus \{\alpha_0\}$,
\begin{align*}
 \|Sx_{\alpha}\|
&\leq  \left\|T_Ax_{\alpha}\right\| + \left(  \left\|\frac{\eps}{2}\, U\right\|+\left\|U- T_Ax_{\alpha_0} \right\| \right) \left|x_{\alpha_0}^*(x_{\alpha})\right|\\
&\leq 1 + r_e \left( \frac{\eps}{2} + \delta \right)<\left(1 +\frac{\eps}{2}\right)(1-\delta).
\end{align*}

Let $B\in \Lin^{N+1}(X,X_1,\ldots,X_N;Y)$ be the corresponding $(N+1)$-linear map of $S$, and it is enough to prove that $B$ has the $\BS$ property. To do so, we first check the conditions of Proposition \ref{prop:one-point-BS} to see $S \in \BS(X;Z)$. Indeed, for $0<\gamma<2$ and $x \in S_X$ such that $\dist \left(\{\theta x_{\alpha_0} : \theta \in \mathbb{T}\}, x\right) > \gamma$, we show that 
$$\|Sx\| \leq \|S\| - \frac{\gamma\left(\|S\|-\left(1 +\dfrac{\eps}{2}\right)(1-\delta)\right)}{4}.$$

From the fact that the absolutely closed convex hull of $\{x_\alpha\}_{\alpha \in I}$ is $B_X$ which can be deduced by the usual separation argument, choose $n \in \N$, an absolutely convex series $\{c_{\alpha_i}\}_{i=0}^n \subset B_\mathbb{K}$ (indeed, $\sum_{i=0}^n|c_{\alpha_i}|\leq1$) and distinct elements $x_{\alpha_1}, \ldots, x_{\alpha_n} \in \{x_\alpha\}_{\alpha \in I}$ such that $z = \sum_{i=0}^n c_{\alpha_i} x_{\alpha_i}$ satisfies
$$
\|z-x\| < \frac{\gamma\left(\|S\|-\left(1 +\dfrac{\eps}{2}\right)(1-\delta)\right)}{4\|S\|} \quad \text{and} \quad \dist\left(\{\theta x_{\alpha_0} : \theta \in \mathbb{T}\},z\right)>\gamma.
$$
If $|c_{\alpha_0}| > 1- \gamma/2$, then
\begin{align*}
\left\|\frac{c_{\alpha_0}}{|c_{\alpha_0} |} x_{\alpha_0} - z \right\|
&\leq \left\|\frac{c_{\alpha_0}}{|c_{\alpha_0} |} x_{\alpha_0} -c_{\alpha_0} x_{\alpha_0} \right\|+\left\|c_{\alpha_0} x_{\alpha_0}-z \right\| \\
&< \frac{\gamma}{2}+\left\|\sum_{i=1}^n c_{\alpha_i} x_{\alpha_i} \right\|\\
&< \gamma.
\end{align*}
Hence we have $|c_{\alpha_0}| \leq 1 - \gamma/2$, and this leads to that
\begin{align*}
\|Sz\| &\leq |c_{\alpha_0}| \|Sx_{\alpha_0}\| + \sum_{i=1}^n |c_j| \|Sx_{\alpha_j}\| \\
&\leq |c_{\alpha_0}| \|S\| + ( 1- |c_{\alpha_0}|) \left(1 +\frac{\eps}{2}\right)(1-\delta) \\
&\leq \|S\| - (1-|c_{\alpha_0}|)\left(\|S\| -\left(1 +\frac{\eps}{2}\right)(1-\delta)\right) \\
&\leq \|S\| - \frac{\gamma\left(\|S\|-\left(1 +\dfrac{\eps}{2}\right)(1-\delta)\right)}{2}.
\end{align*}
Thus, it follows that
$$
\|Sx\|\leq \|Sz\|+\|Sx-Sz\| \leq \|Sz\|+\|S\|\|x-z\| \leq \|S\| - \frac{\gamma\left(\|S\|-\left(1 +\dfrac{\eps}{2}\right)(1-\delta)\right)}{4}.
$$

Therefore, whenever $B\perp_B C$ for $C\in \Lin^{N+1}(X,X_1,\ldots,X_N;Y)$ we have that $Sx_{\alpha_0}\perp_B T_Cx_{\alpha_0}$. Since we know that $Sx_{\alpha_0}=\left(1+\dfrac{\eps}{2}\right)U$ has the $\BS$ property, there exists $(x_{\beta_1},\ldots,x_{\beta_N})\in M_{Sx_{\alpha_0}}$ so that 
$$\|Sx_{\alpha_0}(x_{\beta_1},\ldots,x_{\beta_N}) \|=\|Sx_{\alpha_0}\| \text{~and~} Sx_{\alpha_0}(x_{\beta_1},\ldots,x_{\beta_N}) \perp_B T_Cx_{\alpha_0}(x_{\beta_1},\ldots,x_{\beta_N}).$$
From the facts that $Sx_{\alpha_0}(x_{\beta_1},\ldots,x_{\beta_N}) =B(x_{\alpha_0},x_{\beta_1},\ldots,x_{\beta_N})$ and $T_Cx_{\alpha_0}(x_{\beta_1},\ldots,x_{\beta_N}) =C(x_{\alpha_0},x_{\beta_1},\ldots,x_{\beta_N})$, we finish the proof.
\end{proof}

 The Radon-Nikod\'ym property, RNP in short, had been considered as an important concept to understand the geometry of infinite dimensional Banach spaces. Especially, Stegall \cite{S} proved the following optimization principle on spaces with the RNP. For a subset $D\subset X$, a real valued function $f$ on $D$ is said to \emph{strongly expose} $D$ if there exists $x_0 \in D$ such that $f(x_0)=\sup_{t\in D}f(t)$ and that every sequence $\{x_n\}_{n=1}^\infty \subset D$ converges to $x_0$ whenever $f(x_n)$ converges to $f(x_0)$.

\begin{lemma}\cite[Stegall's optimization principle]{S}\label{lemma:stegall}
Let $X$ be a Banach space, $D \subset Y$ be a bounded RNP set and $f: D \to \R$ is an upper semi-continuous bounded above function. Then,
$$
0\in \overline{\{x^* \in X^* \colon f + \re x^* \text{ strongly exposes } D\}}
$$
\end{lemma}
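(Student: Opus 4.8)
The plan is to reduce the statement to the classical fact that a bounded closed convex set with the RNP has strongly exposing functionals dense in its dual, by lifting the problem to the hypograph of $f$ in $X\times\R$. Since the RNP and the notion of strong exposition involve only the real linear structure, when $\mathbb K=\C$ I regard $X$ as a real Banach space, identifying $X^*$ with its real dual via $x^*\mapsto\re x^*$ (an isometry); thus I may take $\mathbb K=\R$. Fix $\eps>0$; it suffices to produce $x^*\in X^*$ with $\|x^*\|<\eps$ for which $f+x^*$ strongly exposes $D$, since then $0$ lies in the closure of the set in the statement. Rescaling, I assume $D\subset B_X$. As only the values of $f$ near $\sup_D f$ are relevant, I replace $f$ by $\max\{f,\sup_D f-1\}$: this is still upper semi-continuous, has the same supremum, and — a routine check, using that the perturbations constructed below have small norm — has the same strongly exposing points; so I assume $\sup_D f-1\le f\le\sup_D f$ on $D$. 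Finally I assume $D$ closed, passing to $\overline D$ with $f$ extended by its upper envelope if necessary.

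Next, put $Z=X\times\R$ and
\[
H=\{(x,r)\in Z\colon x\in D,\ \sup_D f-1\le r\le f(x)\},\qquad W=\overline{\co}\,H .
\]
Then $W$ is bounded, closed and convex, and $H$ is closed, being the intersection of $D\times\R$, the half-space $\{r\ge\sup_D f-1\}$, and the (closed) hypograph of the upper semi-continuous function $f$. Crucially, $W$ is an RNP set: $W\subset\overline{\co}(D)\times[\sup_D f-1,\sup_D f]=\overline{\co}\bigl(D\times[\sup_D f-1,\sup_D f]\bigr)$, a bounded interval is an RNP set, and the RNP for sets is stable under finite products (componentwise, by the martingale characterisation), closed convex hulls, and subsets — all standard. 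So $W$ is a bounded closed convex RNP set in $Z$.

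Now I apply the classical fact that for a bounded closed convex RNP set the strongly exposing functionals are norm-dense in the dual — this is, up to the density refinement, the statement itself in the special case $f\equiv 0$ with $D$ closed convex, and it is proved by recursively building a chain of slices of $W$ of diameters tending to $0$. Identifying $(X\times\R)^*$ with $X^*\times\R$, I choose $\phi=(y^*,s)$ strongly exposing $W$, at some $w_0=(x_0,r_0)\in W$, with $\phi$ close enough to $(0,1)$ that $\|y^*\|$ is as small as I wish and $s>\tfrac12$. Since $\phi$ is linear and continuous, $\sup_W\phi=\sup_H\phi$, so a sequence $h_n\in H$ with $\phi(h_n)\to\phi(w_0)$ converges to $w_0$; hence $w_0\in\overline H=H$. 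Since $s>0$, $\phi$ is maximised over each fibre of $H$ only at its top, so $w_0=(x_0,f(x_0))$ with $x_0\in D$ and $\sup_W\phi=\sup_{x\in D}\bigl(y^*(x)+sf(x)\bigr)$. Dividing by $s$: the function $g:=f+y^*/s$ on $D$ attains its supremum at $x_0$, and if $x_n\in D$ satisfies $g(x_n)\to\sup_D g$ then $\phi(x_n,f(x_n))=s\,g(x_n)\to\sup_W\phi$, so since $\phi$ strongly exposes $W$ we get $(x_n,f(x_n))\to(x_0,f(x_0))$ in $Z$, in particular $x_n\to x_0$. Thus $f+y^*/s$ strongly exposes $D$ at $x_0$, and $\|y^*/s\|=\|y^*\|/s<2\|y^*\|$ is $<\eps$ once $\|y^*\|<\eps/2$. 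Taking $x^*:=y^*/s$ completes the argument.

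The genuine difficulty is concentrated in the single infinite-dimensional step: the density of strongly exposing functionals for a bounded closed convex RNP set. If one wanted it from scratch rather than quoted, the delicate point is the recursive construction of nested slices $S_1\supset S_2\supset\cdots$ of $W$ with $\diam S_n\to0$, whose unique common point is the exposed point; this is where ``every subset of an RNP set is dentable'' enters, together with a careful choice of the successive slice widths ensuring that each new slice stays inside the previous one. Everything else — the normalisations, the passage to the hypograph, and transferring the conclusion back to $D$ — is routine bookkeeping.
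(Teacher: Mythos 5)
The paper offers no proof of this lemma: it is quoted directly from Stegall's paper \cite{S}, so there is nothing internal to compare your argument with; it must be judged on its own. Judged that way, it has one load-bearing step that does not hold up as written. Everything hinges on $W=\overline{\co}(H)$ being a closed bounded convex set with the RNP, so that the Phelps--Bourgain theorem on density of strongly exposing functionals applies; since $H\supset D\times\{\sup_D f-1\}$, this forces $\overline{\co}(D)$ itself to have the RNP. You dismiss this with ``the RNP for sets is stable under \dots closed convex hulls --- all standard'', but for the usual meaning of a bounded (non-convex) RNP set --- every nonempty subset is dentable --- the passage to the closed convex hull is emphatically not a routine stability property: it is precisely the point where hereditary dentability of $D$ must be upgraded to a martingale-convergence statement about $\overline{\co}(D)$, and it does not follow from the subset/product manipulations you allude to. (Even the product stability for non-convex sets is not immediate: a separated bush in $D\times I$ need not project to a bush in either factor.) In effect the entire content of Stegall's theorem has been hidden inside that one sentence. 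Stegall's own route avoids convexification altogether: one proves a slice-localization lemma directly on the non-convex set ($H$, or $D$ with the perturbed function), using only that every subset of $D$ is dentable, and then iterates with a completeness/Baire argument.

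A second, smaller defect: the reduction to closed $D$ by passing to $\overline D$ does not work, because a functional strongly exposing $\overline D$ at a point of $\overline D\setminus D$ yields no strongly exposing perturbation of $D$; indeed the lemma is false for non-closed $D$ (take $D=(-1,1)\subset\R$ and $f\equiv 0$: no perturbation strongly exposes $D$), so closedness must be read into the hypothesis, as it is in the paper's application. That said, where the paper actually invokes the lemma, $D=B_{X_1}\times\cdots\times B_{X_N}$ is already closed, bounded and convex with the RNP; under that extra hypothesis $W\subset D\times I$ inherits the RNP for free, and the remainder of your argument --- the truncation of $f$, the identification of the maximizer of $\phi=(y^*,s)$ with the top of a fibre of $H$, and the transfer of strong exposure back to $f+y^*/s$ on $D$ --- is correct and gives a clean reduction to the classical convex case. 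So the proposal is salvageable for the paper's purposes, but as a proof of the lemma as stated it has a genuine gap.
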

For more information on the RNP, we refer the reader to the classical monograph \cite{DU}. In \cite{K}, it is proved for the real case that if $X$ is a Banach space with the RNP, then $\BS(X;Y)$ is dense in $\Lin(X;Y)$ for every Banach space $Y$. Applying the idea of \cite{AFW}, we extend this result to $N$-linear maps, and we cover complex case as well. 

\begin{prop}\label{RNPresult}
Let $X_1,\ldots,X_N$ be Banach spaces with the RNP. Then, $\BS(X_1,\ldots,X_N;Y)$ is dense in $\Lin^N(X_1,\ldots,X_N;Y)$ for every Banach space $Y$.
\end{prop}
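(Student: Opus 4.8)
The plan is to reduce the $N$-linear case to the operator case via the canonical isometry $\Lin^N(X_1,\ldots,X_N;Y)=\Lin(X_1;Z)$ where $Z=\Lin^{N-1}(X_2,\ldots,X_N;Y)$, and then use Stegall's optimization principle (Lemma \ref{lemma:stegall}) on $B_{X_1}$ to perturb $A$ into a map whose norm-attaining set has the simple ``single orbit'' structure required by Proposition \ref{prop:one-point-BS}. Concretely, given $A\in\Lin^N$ of norm one and $\eps>0$, I would write $T_A\in\Lin(X_1;Z)$ and consider the upper semicontinuous function $f(x)=\|T_Ax\|$ on the bounded RNP set $B_{X_1}$; by Stegall there is $x_1^*\in X_1^*$ of small norm so that $f+\re x_1^*$ strongly exposes $B_{X_1}$ at some point, which after normalizing (and absorbing a unimodular scalar) I may take to be a unit vector $x_0^1$. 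Replacing $T_A$ by $S(\cdot)=T_A(\cdot)+x_1^*(\cdot)\,w$ for a suitable $w\in Z$ — or more carefully by the standard Lindenstrauss-type perturbation that turns a strongly exposing functional into genuine norm attainment while keeping the strong-exposedness — produces $S$ close to $T_A$ whose norm is attained precisely on $\mathbb{T}\cdot x_0^1$ and which moreover satisfies the gap condition: any closed set $D\subset S_{X_1}$ with $\dist(\mathbb{T}x_0^1,D)>0$ has $\sup_{x\in D}\|Sx\|<\|S\|$, because strong exposedness of $B_{X_1}$ by $f+\re x_1^*$ exactly says that $\|Sx\|$ cannot approach $\|S\|$ along points bounded away from $x_0^1$.

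Once $S\in\Lin(X_1;Z)$ has this structure, I would iterate: to handle the $N$-linear (not merely operator-valued) conclusion I need the value $Sx_0^1\in Z=\Lin^{N-1}(X_2,\ldots,X_N;Y)$ itself to have the $\BS$ property. This is where Proposition \ref{alpha} is not directly available (we are not assuming property quasi-$\alpha$), so instead I would run the perturbation argument inductively on $N$: the inductive hypothesis gives denseness of $\BS(X_2,\ldots,X_N;Y)$ in $Z$, so I can first replace $Sx_0^1$ by a nearby $U\in\BS(X_2,\ldots,X_N;Y)$ with $\|U\|=\|Sx_0^1\|$, then re-perturb $S$ (as in the proof of Proposition \ref{alpha}, using $S(\cdot)+[(1+\eps/2)U-Sx_0^1]x_1^*(\cdot)/x_1^*(x_0^1)$ or a cleaner variant) so that the new operator still attains its norm only on $\mathbb{T}x_0^1$, still has the gap property, and now takes the value $(1+\eps/2)U$ there. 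Then Proposition \ref{prop:one-point-BS} applies to the corresponding $(N)$-linear map $B$: for any $C$ with $B\perp_B C$ we get $B(x_0^1,\ldots)=Sx_0^1$-value orthogonal to $C(x_0^1,\ldots)=T_Cx_0^1$-value, and since $U$ has the $\BS$ property we can further descend to a tuple $(x_{\beta_2},\ldots,x_{\beta_N})\in M_U$ realizing the orthogonality, giving $(x_0^1,x_{\beta_2},\ldots,x_{\beta_N})\in M_B$ with $B(\cdot)\perp_B C(\cdot)$ at that tuple. The base case $N=1$ is precisely the RNP result of \cite{K}, which the Stegall argument above reproves while simultaneously covering the complex field (the only change from the real argument being that $M_S=\mathbb{T}x_0^1$ rather than $\{\pm x_0^1\}$, which is exactly what Proposition \ref{prop:one-point-BS} is designed to accommodate).

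The main obstacle I anticipate is the passage from ``$f+\re x_1^*$ strongly exposes $B_{X_1}$'' to an actual perturbed operator $S$ with $\|S\|$ attained only on $\mathbb{T}x_0^1$ together with the quantitative gap hypothesis of Proposition \ref{prop:one-point-BS}: one must choose the perturbation so that $\|Sx\|$ is controlled above by something like $f(x)+\re x_1^*(x)$ up to small error, deduce that the exposing point of this scalar function is the unique (mod $\mathbb{T}$) maximizer of $\|S\cdot\|$, and then convert strong exposedness — which a priori is a sequential statement — into the statement that the sup over any $\dist$-separated closed set is strictly smaller than $\|S\|$. This last conversion needs care because $S_{X_1}$ need not be compact; the argument is that if $\sup_{x\in D}\|Sx\|=\|S\|$ along a sequence in $D$, strong exposedness forces that sequence to converge to $x_0^1$, contradicting $\dist(\mathbb{T}x_0^1,D)>0$ — essentially the same contradiction scheme already used inside the proof of Proposition \ref{prop:one-point-BS}. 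A secondary technical point is bookkeeping the scalar $x_1^*(x_0^1)$ (which is nonzero since $f+\re x_1^*$ is maximized at $x_0^1$ on the ball, forcing $\re x_1^*(x_0^1)=\|x_1^*\|$) when defining the rank-one correction, and verifying that after this correction the new operator's norm is still attained nowhere else; both are routine once the structure is set up.
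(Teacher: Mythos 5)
Your reduction to $T_A\in\Lin(X_1;Z)$ with $Z=\Lin^{N-1}(X_2,\ldots,X_N;Y)$, followed by induction on $N$, is a genuinely different route from the paper's, and it breaks at the re-perturbation step. After Stegall gives you $x_1^*$ with $\|T_A(\cdot)\|+\re x_1^*(\cdot)$ strongly exposing $B_{X_1}$ at $x_0^1$, your first perturbation $S=T_A(\cdot)+x_1^*(\cdot)w$ is fine (and for $N=1$ the proof is complete), but the correction $S'(\cdot)=S(\cdot)+\bigl[(1+\eps/2)U-Sx_0^1\bigr]\,x_1^*(\cdot)/x_1^*(x_0^1)$ is not justified. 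Stegall only guarantees $x_1^*(x_0^1)\neq 0$ (from uniqueness of the exposed point; your claim that $\re x_1^*(x_0^1)=\|x_1^*\|$ is false in general, since $\|T_A(\cdot)\|$ is not constant). Hence the normalized functional $x_1^*(\cdot)/x_1^*(x_0^1)$ can have norm far larger than $1$, and at a point $x$ with $|x_1^*(x)|\gg|x_1^*(x_0^1)|$ and $\|Sx\|$ only slightly below $\|S\|$, the added term of size $\tfrac{|x_1^*(x)|}{|x_1^*(x_0^1)|}\bigl(\tfrac{\eps}{2}\|U\|+\|U-Sx_0^1\|\bigr)$ can exceed the margin $\|S\|-\|Sx\|$ plus the gain $\tfrac{\eps}{2}\|S\|$ at $x_0^1$; so you cannot conclude that $S'$ attains its norm only on $\mathbb{T}x_0^1$, nor that the gap condition of Proposition \ref{prop:one-point-BS} survives. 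This is precisely the estimate that conditions (i)--(iii) of property quasi-$\alpha$ supply in Proposition \ref{alpha} ($|x_{\alpha_0}^*(x_\beta)|\leq r_e<1=x_{\alpha_0}^*(x_{\alpha_0})$ uniformly over a generating set of the ball); the Stegall functional carries no such separation, and redoing Stegall after the correction would move the exposed point and destroy the $\BS$ property of the value there, giving a regress.

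The paper avoids the induction entirely: since a finite $\ell_\infty$-sum of RNP spaces has the RNP, it applies Stegall once to $\|A(\cdot)\|$ on the product ball $B_{X_1}\times\cdots\times B_{X_N}$, obtaining $\phi=(x_1^*,\ldots,x_N^*)$ of small norm such that $\|A(\cdot)\|+\re\phi(\cdot)$ strongly exposes the product at some $(x_1,\ldots,x_N)\in S_{X_1}\times\cdots\times S_{X_N}$, and then makes the single Aron--Finet--Werner perturbation
$$
B(z_1,\ldots,z_N)=A(z_1,\ldots,z_N)+\sum_{j=1}^N\Bigl(x_j^*(z_j)\prod_{i\neq j}w_i^*(z_i)\Bigr)\frac{A(x_1,\ldots,x_N)}{\|A(x_1,\ldots,x_N)\|}.
$$
Then $M_B$ is exactly the $\mathbb{T}^N$-orbit of $(x_1,\ldots,x_N)$ with the required gap condition, and Proposition \ref{prop:one-point-BS} yields $B(x_1,\ldots,x_N)\perp_B C(x_1,\ldots,x_N)$ directly as vectors in $Y$; no intermediate value of $B$ ever needs to have the $\BS$ property as an $(N-1)$-linear map, which is exactly the requirement your inductive scheme cannot secure.
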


\begin{proof}
For an arbitrary $A \in \Lin^N(X_1,\ldots,X_N;Y)$, our goal is to approximate $A$ with elements in $\BS (X_1,\ldots,X_N;Y)$. Without loss of generality, we may assume that $\|A\|=1$. Fix any $0<\eps<1/2$ and we first follow  the idea in \cite{AFW} to find  $B \in \BS (X_1,\ldots,X_N;Y)$ so that $\|B-A\|<\eps$.  Indeed, as $X_1, \ldots,X_{N}$ have the RNP, we see that the finite $\ell_\infty$-sum $X_1 \oplus_\infty \cdots \oplus_\infty X_N$ has the RNP. Thus by Lemma \ref{lemma:stegall}, there exists $\phi = (x_1^*,\ldots,x_N^*) \in X_1^* \oplus_1 \cdots \oplus_1 X_N^*$ with $\|\phi\|<\eps$ such that $\|A(\cdot)\|+\re \phi(\cdot)$ strongly exposes $B_{X_1} \times \cdots \times B_{X_N}$ at some $(x_1,\ldots,x_N)$.

We see that all the elements $x_1,\ldots,x_N$ are nonzero from the inequality
$$
\|A(x_1,\ldots,x_N)\| + \re \left[\phi(x_1,\ldots,x_N)\right] \geq \|A(z_1,\ldots,z_N)\| + \re \left[\phi(z_1,\ldots,z_N)\right]
$$
for all $(z_1,\ldots,z_N) \in B_{X_1} \times \cdots \times B_{X_N}$, since $\|A(x_1,\ldots,x_N)\| + \re \left[\phi(x_1,\ldots,x_N)\right] < \eps < 1/2$ otherwise.

 Thus we see that $\|x_1\|=\ldots=\|x_N\|=1$ by normalization, and so there exists $(w_1^*,\ldots,w_N^*) \in S_{{X_1}^*} \times \cdots \times S_{{X_N}^*}$ such that $w_1^*(x_1)=\ldots=w_N^*(x_N)=1$.
We also note that $|\phi(x_1,\ldots,x_N)|=\phi(x_1,\ldots,x_N)$ and 
$$
\|A(x_1,\ldots,x_N)\| + \re \left[\phi(x_1,\ldots,x_N)\right] \geq \|A(z_1,\ldots,z_N)\| + \left|\phi(z_1,\ldots,z_N)\right|
$$
for all $(z_1,\ldots,z_N) \in B_{X_1} \times \cdots \times B_{X_N}$ by rotation of elements.

 As in \cite{AFW}, we clearly see a map $B \in \Lin^N(X_1,\ldots,X_N;Y)$ given by
$$
B(z_1,\ldots,z_N) := A(z_1,\ldots,z_N) + \sum_{j=1}^n\left(x_j^*(z_j) \prod_{\substack{i=1 \\ i\neq j}}^n w_i^*(z_i)\right)\frac{A(x_1,\ldots,x_N)}{\|A(x_1,\ldots,x_N)\|} \quad \text{for } (z_1,\ldots,z_N) \in X_1 \times \cdots \times X_N
$$
 attains its norm at $(x_1,\ldots,x_N)$, and especially we deduce 
$$\|B\|=\|B(x_1,\ldots,x_N)\|=\|A(x_1,\ldots,x_N)\|+\re \left[\phi(x_1,\ldots,x_N)\right].$$

 We now show that $B$ is the desired one by checking the conditions in Proposition \ref{prop:one-point-BS}. Let
$$M:=\{( \theta^1 x_1, \ldots, \theta^N x_N) \in S_{X_1} \times \cdots \times S_{X_N} \colon (\theta^1, \ldots, \theta^N) \in \mathbb{T}^N \}.$$
If there exists a closed subset $D$ of $S_{X_1} \times \cdots \times S_{X_N}$ so that 
$$\sup_{(z^1,\ldots,z^N) \in D} \|B(z^1,\ldots,z^N)\| = \|B\| \quad \text{and} \quad \dist(M,D)>0,$$
there is a sequence $\left\{(z_j^1,\ldots,z_j^N)\right\}_{j=1}^\infty \subset D$ satisfying $\|B(z_j^1,\ldots,z_j^N)\|$ converges to $\|B\|$. Since
\begin{align*}
\|B(\theta_j z_j^1,\ldots,\theta_j z_j^N)\|&\leq \|A(z_j^1,\ldots,z_j^N)\|+\left|\phi (\theta_j z_j^1,\ldots,\theta_j z_j^N)\right| \\
&= \|A(z_j^1,\ldots,z_j^N)\|+\re \left[\phi (z_j^1,\ldots,z_j^N)\right] \\
&\leq \|A(x_1,\ldots,x_N)\| + \re \left[\phi(x_1,\ldots,x_N)\right]
\end{align*}
for a suitable $\theta_j \in \mathbb{T}$ for each $j$, we have that $(\theta_j z_j^1,\ldots,\theta_j z_j^N)$ converges to $(x_1,\ldots,x_N)$ from the strong exposedness of $\|A(\cdot)\|+\re \phi(\cdot)$, and this contradicts to $\dist(M,D)>0$.
\end{proof}

As a consequence of Propositions \ref{alpha} and \ref{RNPresult}, we have the following.
\begin{cor}\label{alphaRNP}
Let $X_1,\ldots,X_N$ be Banach spaces having at least one of properties among the RNP and property quasi-$\alpha$. Then, $\BS(X_1,\ldots,X_N;Y)$ is dense in $\Lin^N(X_1,\ldots,X_N;Y)$ for every Banach space $Y$.
\end{cor}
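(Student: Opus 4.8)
The plan is to obtain this purely by chaining Propositions \ref{alpha} and \ref{RNPresult}, once we observe that the $\BS$ property for $N$-linear maps is invariant under permuting the factors. Concretely, for a permutation $\pi$ of $\{1,\ldots,N\}$ the map sending $A$ to $A^{\pi}(z^1,\ldots,z^N):=A(z^{\pi^{-1}(1)},\ldots,z^{\pi^{-1}(N)})$ is a surjective linear isometry from $\Lin^N(X_1,\ldots,X_N;Y)$ onto $\Lin^N(X_{\pi(1)},\ldots,X_{\pi(N)};Y)$; it sends $A+\lambda B$ to $A^{\pi}+\lambda B^{\pi}$, hence preserves Birkhoff--James orthogonality, and it matches $M_A$ with $M_{A^{\pi}}$ under the corresponding coordinate permutation and $A(\cdot)\perp_B B(\cdot)$ with $A^{\pi}(\cdot)\perp_B B^{\pi}(\cdot)$. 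Thus $A\in\BS(X_1,\ldots,X_N;Y)$ if and only if $A^{\pi}\in\BS(X_{\pi(1)},\ldots,X_{\pi(N)};Y)$, and this correspondence is a homeomorphism, so denseness may be checked after an arbitrary relabelling. I would therefore relabel so that, for some $0\le k\le N$, the spaces $X_1,\ldots,X_k$ have property quasi-$\alpha$ while $X_{k+1},\ldots,X_N$ have the RNP, with $k=N$ occurring precisely when none of the $X_i$ has the RNP.

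First suppose $k<N$. Then $X_{k+1},\ldots,X_N$ all have the RNP, so Proposition \ref{RNPresult} gives that $\BS(X_{k+1},\ldots,X_N;Y)$ is dense in $\Lin^{N-k}(X_{k+1},\ldots,X_N;Y)$. Feeding this tuple into Proposition \ref{alpha} with $X=X_k$ (which has property quasi-$\alpha$), we conclude that $\BS(X_k,X_{k+1},\ldots,X_N;Y)$ is dense in $\Lin^{N-k+1}(X_k,\ldots,X_N;Y)$. Iterating — prepending $X_{k-1}$, then $X_{k-2}$, and so on down to $X_1$, that is, applying Proposition \ref{alpha} a total of $k$ times — yields that $\BS(X_1,\ldots,X_N;Y)$ is dense in $\Lin^N(X_1,\ldots,X_N;Y)$. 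Undoing the relabelling by the first paragraph finishes this case (the subcase $k=0$ being exactly Proposition \ref{RNPresult} with no further step).

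There remains the degenerate case $k=N$, in which every $X_i$ has property quasi-$\alpha$ and possibly none has the RNP; here a starting point for the same chain is needed. This is supplied by the $N=1$ instance of Proposition \ref{alpha}: if $X$ has property quasi-$\alpha$ then $\BS(X;Y)$ is dense in $\Lin(X;Y)$. One obtains this by running the proof of Proposition \ref{alpha} with the ``inner'' tuple empty, reading $\Lin^0(\,;Y)$ as $Y$ itself and noting that every vector of $Y$ trivially has the $\BS$ property — for such a ``$0$-linear map'' $M_A$ is a single point and the $\BS$ requirement reduces to the very hypothesis $A\perp_B B$ — whereupon the argument of Proposition \ref{alpha} goes through word for word. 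Starting from ``$\BS(X_N;Y)$ dense in $\Lin(X_N;Y)$'' and applying Proposition \ref{alpha} $N-1$ times to prepend $X_{N-1},\ldots,X_1$ completes the proof.

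Since the two cited propositions already carry all the analytic content, the only parts demanding care are bookkeeping: the permutation-invariance of the $\BS$ property used for the relabelling, and the isolation of the $N=1$ base case required in the all-quasi-$\alpha$ situation. I expect the latter — checking that the proof of Proposition \ref{alpha} truly survives when the inner multilinear data degenerates to a point — to be the single spot that warrants a second look.
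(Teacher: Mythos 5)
Your proposal is correct and, for the mixed case, follows exactly the paper's argument: rearrange so that the quasi-$\alpha$ spaces precede the RNP spaces, apply Proposition \ref{RNPresult} to the RNP block, and prepend the quasi-$\alpha$ factors one at a time via Proposition \ref{alpha}. (The paper says only ``by a suitable rearrangement of spaces''; your explicit check that the coordinate-permutation isometry preserves the $\BS$ property supplies the justification it leaves implicit.) The one point of divergence is the all-quasi-$\alpha$ case. The paper avoids any degenerate instance of Proposition \ref{alpha} by appending an artificial scalar factor $X_{N+1}=\mathbb{K}$, which has the RNP: the mixed-case chain then gives denseness of $\BS(X_1,\ldots,X_N,\mathbb{K};Y)$ in $\Lin^{N+1}(X_1,\ldots,X_N,\mathbb{K};Y)$, and the canonical isometry with $\Lin^N(X_1,\ldots,X_N;\Lin(\mathbb{K};Y))=\Lin^N(X_1,\ldots,X_N;Y)$ transports the conclusion back. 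You instead run Proposition \ref{alpha} with the inner tuple empty, reading $\Lin^0(\,;Y)$ as $Y$ and every vector of $Y$ as trivially having the $\BS$ property. Both devices work, and each leaves a small bookkeeping obligation that its author glosses over: yours is to confirm that the proof of Proposition \ref{alpha} survives the case $N=0$ (it does --- one may take $U=T_Ax_{\alpha_0}$ itself, and the final paragraph of that proof becomes vacuous since the desired conclusion $S\in\BS(X;Y)$ is already delivered by Proposition \ref{prop:one-point-BS}); the paper's is to confirm that the isometry stripping off the $\mathbb{K}$ factor identifies the two $\BS$ classes. The two routes are of comparable length and rigor, and neither buys anything substantive over the other.
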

\begin{proof} We first consider the case that there are at least two types of different spaces in $X_i$'s such that some has the RNP and the rest has property quasi-$\alpha$. By a suitable rearrangement of spaces, we assume that there exists $1\leq k \leq N-1$ so that $X_i$ has property quasi-$\alpha$ if $1\leq i\leq k$ and it has the RNP otherwise. From Proposition \ref{RNPresult}, we have that $\BS(X_{k+1},\ldots,X_N;Y)$ is dense in $\Lin^{N-k}(X_{k+1},\ldots,X_N;Y)$. Thus Proposition \ref{alpha} shows that $\BS(X_{k},\ldots,X_N;Y)$ is dense in $\Lin^{N-k+1}(X_{k},\ldots,X_N;Y)$. From the usual induction argument we prove the statement.

 Since the case that all of $X_i$'s having the RNP is proved in Proposition \ref{RNPresult}, it remains to show the case when all $X_i$'s having property quasi-$\alpha$. We here take the isometry $Y=\Lin(\mathbb{K};Y)$ and use $N+1$ and $X_{N+1}=\mathbb{K}$ instead of $N$. Since $\mathbb{K}$ has the RNP, it is possible to apply the argument above and we see that $\BS(X_1,\ldots,X_{N+1};Y)$ is dense in $\Lin^{N+1}(X_1,\ldots,X_{N+1};Y)$. Hence, from the canonical isometry, we have that $\BS(X_1,\ldots,X_{N};\Lin(X_{N+1}^*;Y))$ is dense in $\Lin^N(X_1,\ldots,X_{N};\Lin(X_{N+1}^*;Y))$ and so we finish the proof.
\end{proof}

We now move on to a condition for range spaces for the denseness, which is a dual notion of property quasi-$\alpha$.

\begin{definition} \cite{AAP}
A Banach space $X$ is said to have \emph{property quasi-$\beta$} if there exist an index set $I$, $\{x_\alpha\}_{\alpha \in I} \subset S_X$, $\{x_\alpha^*\}_{\alpha \in I} \subset S_{X^*}$, and $\lambda \colon \{x_\alpha^*\}_{\alpha \in I} \to \R$ satisfying that
\begin{enumerate}
\item[\textup{(i)}] $x_\alpha^*(x_\alpha)=1$ for all $\alpha \in I$,
\item[\textup{(ii)}] $|x_\beta^*(x_\alpha)| \leq \lambda(x_\alpha^*) < 1$ for all $\alpha, \beta \in I$ with $\alpha \neq \beta$,
\item[\textup{(iii)}] For every $e^* \in \operatorname{Ext}(B_{X^*})$, there exists $I_{e^*} \subset I$ such that $te^* \in \overline{\{x_\alpha^*\}}^{w^*}_{\alpha \in I_e}$ for some $t \in \mathbb{T}$ and $r_{e^*} = \sup \{ \lambda(x_\alpha^*) \colon \alpha \in I_{e^*} \} <1$.
\end{enumerate}
\end{definition}

Similarly to the case of property quasi-$\alpha$, property quasi-$\beta$ is introduced as a sufficient condition on the range space for the denseness of norm attaining operators. In \cite{KL}, the authors showed that property $\beta$ of a real Banach space $Y$ is a universal condition for $\BS(X;Y)$ being dense in $\Lin(X;Y)$ provided that $\BS(X;\R)$ is dense in $\Lin(X;\R)$. We improve this result with a strictly weaker property, and we refer to \cite{AAP} for more information on property quasi-$\beta$.

\begin{prop}
Let $X_1, \ldots, X_N$ be Banach spaces such that $\BS(X_1,\ldots,X_N;\mathbb{K})$ is dense in $\Lin^N(X_1,\ldots,X_N;\mathbb{K})$. Let $Y$ be a Banach space with property quasi-$\beta$. Then, $\BS(X_1,\ldots,X_N;Y)$ is dense in $\Lin^N(X_1,\ldots,X_N;Y)$.
\end{prop}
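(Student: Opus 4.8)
The plan is to adapt Partington's property-$\beta$ argument for norm attainment, in the quasi-$\beta$ refinement, to the multilinear Bhatia--\v{S}emrl setting, so that the only genuinely infinite-dimensional input becomes the scalar hypothesis on $X_1,\ldots,X_N$. Fix $A\in\Lin^N(X_1,\ldots,X_N;Y)$ with $\|A\|=1$ and $\eps>0$, and let $\{y_\alpha\}_{\alpha\in I}\subset S_Y$, $\{y_\alpha^*\}_{\alpha\in I}\subset S_{Y^*}$, $\lambda$ witness property quasi-$\beta$ of $Y$. The first step records the standard consequence of condition (iii): since $B_{Y^*}=\overline{\co}^{\,w^*}(\operatorname{Ext}B_{Y^*})$ and every extreme point of $B_{Y^*}$ is a $w^*$-limit of a net of modulus-one multiples of the $y_\alpha^*$, one gets $\|y\|=\sup_\alpha|y_\alpha^*(y)|$ for all $y\in Y$, hence $\|A\|=\sup_\alpha\|y_\alpha^*\circ A\|=\sup_{e^*\in\operatorname{Ext}B_{Y^*}}\|e^*\circ A\|$, where $y_\alpha^*\circ A$ and $e^*\circ A$ are regarded as elements of $\Lin^N(X_1,\ldots,X_N;\mathbb{K})$. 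I would then fix \emph{one} extreme functional $e_0^*$ with $\|e_0^*\circ A\|=1$ (see the last paragraph for why this is legitimate), apply condition (iii) to it to obtain $I_{e_0^*}\subset I$ with $r_0:=r_{e_0^*}<1$, and, since $\sup_{\alpha\in I_{e_0^*}}\|y_\alpha^*\circ A\|\ge\|e_0^*\circ A\|=1$, pick $\alpha_0\in I_{e_0^*}$ so that $g:=y_{\alpha_0}^*\circ A$ has $\|g\|$ as close to $1$ as we wish while $\lambda(y_{\alpha_0}^*)\le r_0$. The point to keep in mind is that, by condition (ii), $|y_\beta^*(y_{\alpha_0})|\le\lambda(y_{\alpha_0}^*)\le r_0<1$ \emph{uniformly over all $\beta\ne\alpha_0$}.

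Using the hypothesis that $\BS(X_1,\ldots,X_N;\mathbb{K})$ is dense in $\Lin^N(X_1,\ldots,X_N;\mathbb{K})$, I would pick $\psi\in\BS(X_1,\ldots,X_N;\mathbb{K})$ with $\|\psi\|=\|g\|$ and $\|\psi-g\|$ tiny; note $\psi$ attains its norm, since $\psi\perp_B 0$ together with $\psi\in\BS$ forces $M_\psi\ne\emptyset$. Set, with $\gamma:=\eps/2$,
\[
B(z^1,\ldots,z^N):=A(z^1,\ldots,z^N)+\left[(1+\gamma)\psi(z^1,\ldots,z^N)-g(z^1,\ldots,z^N)\right]y_{\alpha_0}.
\]
Then $y_{\alpha_0}^*\circ B=(1+\gamma)\psi$, so $\|y_{\alpha_0}^*\circ B\|=(1+\gamma)\|g\|$, while for $\beta\ne\alpha_0$ the uniform bound gives $\|y_\beta^*\circ B\|\le 1+(\gamma+\|\psi-g\|)r_0$. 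With $r_0$ and $\gamma$ already fixed, choosing $\|g\|$ close enough to $1$ and $\|\psi-g\|$ small enough yields the strict gap $\sup_{\beta\ne\alpha_0}\|y_\beta^*\circ B\|<(1+\gamma)\|g\|$. Since $\|B(z^1,\ldots,z^N)\|=\sup_\alpha|y_\alpha^*(B(z^1,\ldots,z^N))|$, this forces $\|B\|=(1+\gamma)\|g\|=\|y_{\alpha_0}^*\circ B\|$ (``the norm of $B$ is detected by $\alpha_0$''), keeps $\|B-A\|=\|(1+\gamma)\psi-g\|\le\gamma+\|\psi-g\|\le\eps$, and shows that $M_B=M_\psi$, with $|y_{\alpha_0}^*(B(\cdot))|=\|B\|$ on it.

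It remains to check $B\in\BS(X_1,\ldots,X_N;Y)$. Let $C\in\Lin^N(X_1,\ldots,X_N;Y)$ with $B\perp_B C$ (if $C=0$ we are done, since $B\in\NA$). Because of the gap, there is an explicit $t>0$ such that for $|\mu|<t$ one has $\|B+\mu C\|=\|y_{\alpha_0}^*\circ(B+\mu C)\|=\|(1+\gamma)\psi+\mu\,(y_{\alpha_0}^*\circ C)\|$; combined with $\|B\|\le\|B+\mu C\|$ this makes $\mu=0$ a local, hence (by convexity of $\mu\mapsto\|(1+\gamma)\psi+\mu\,(y_{\alpha_0}^*\circ C)\|$ over $\mathbb{K}$) a global minimiser, i.e. $\psi\perp_B(y_{\alpha_0}^*\circ C)$ in $\Lin^N(X_1,\ldots,X_N;\mathbb{K})$. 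The scalar $\BS$ property of $\psi$ now produces $(v^1,\ldots,v^N)\in M_\psi$ with $\psi(v^1,\ldots,v^N)\perp_B(y_{\alpha_0}^*\circ C)(v^1,\ldots,v^N)$ in $\mathbb{K}$; as $\psi(v^1,\ldots,v^N)\ne 0$ this forces $(y_{\alpha_0}^*\circ C)(v^1,\ldots,v^N)=0$. Then $|y_{\alpha_0}^*(B(v^1,\ldots,v^N))|=(1+\gamma)\|\psi\|=\|B\|$, so $(v^1,\ldots,v^N)\in M_B$, and for every $\mu\in\mathbb{K}$
\[
\|B(v^1,\ldots,v^N)+\mu\,C(v^1,\ldots,v^N)\|\ge\bigl|y_{\alpha_0}^*(B(v^1,\ldots,v^N))+\mu\cdot 0\bigr|=\|B\|=\|B(v^1,\ldots,v^N)\|,
\]
that is, $B(v^1,\ldots,v^N)\perp_B C(v^1,\ldots,v^N)$, which is what we wanted.

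The step I expect to be the real obstacle is not any single inequality but the order in which the parameters are chosen: the closeness of $\|y_{\alpha_0}^*\circ A\|$ to $\|A\|$ needed for the gap depends on $\lambda(y_{\alpha_0}^*)$, which itself depends on $\alpha_0$. I would break this circularity by fixing the extreme functional $e_0^*$ --- and hence the bound $r_0<1$ --- \emph{before} any smallness parameter, which is why I first want $\|e_0^*\circ A\|$ to be exactly $\|A\|$ rather than merely close: to arrange that, I would begin by approximating $A$ by a norm-attaining $N$-linear map (denseness of norm-attaining $N$-linear maps being available once the range has property quasi-$\beta$, or provable by the same RNP/$\alpha$/$\beta$-type scheme used above without the Bhatia--\v{S}emrl upgrade) which attains its norm at some tuple $(z_0^1,\ldots,z_0^N)$, and take $e_0^*$ to be an extreme point of the $w^*$-compact face of $B_{Y^*}$ determined by $A(z_0^1,\ldots,z_0^N)$. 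A minor, purely bookkeeping point is that scaling, rotation, and the local-to-global convexity argument work identically over $\mathbb{R}$ and $\mathbb{C}$, which is exactly why the conclusion is stated over $\mathbb{K}$.
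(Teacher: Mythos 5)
Your core construction and verification coincide with the paper's proof: the perturbation $B=A+[(1+\eps/2)\psi-y_{\alpha_0}^*\circ A]\,y_{\alpha_0}$, the gap estimate $\sup_{\beta\neq\alpha_0}\|y_\beta^*\circ B\|\leq 1+r_{e^*}(\eps/2+\delta)<(1+\eps/2)(1-\delta)$ via condition (ii), the reduction of $B\perp_B C$ to $\psi\perp_B(y_{\alpha_0}^*\circ C)$ by the local-minimum-implies-global-minimum convexity argument, and the final use of the scalar $\BS$ property of $\psi$ are all exactly what the paper does (the paper writes $y_{\alpha_0}^*\circ B$ as $\widetilde{B}^*y_{\alpha_0}^*$ through the projective tensor product linearization, but that is only notation). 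The one step where you genuinely diverge is the localization of an extreme point $e_0^*$ with $\|e_0^*\circ A\|=\|A\|$, which is what makes quasi-$\beta$ (as opposed to $\beta$) usable. The paper obtains it from Zizler's theorem \cite{Z} (density of maps whose linearization has a norm-attaining adjoint) together with Lima's result \cite{Lima} that such an adjoint attains its norm at an extreme point of $B_{Y^*}$. You instead pre-approximate $A$ by a norm-attaining $N$-linear map and take an extreme point, via Krein--Milman, of the $w^*$-compact face of $B_{Y^*}$ exposed by the image of the norming tuple; that is a legitimate alternative, but your justification of the pre-approximation is the weak point. Quasi-$\beta$ of $Y$ alone does \emph{not} give density of $\NA(X_1,\ldots,X_N;Y)$ for $N\geq 2$: unlike the operator case, where Bishop--Phelps supplies density of norm-attaining scalar functionals for free, $\NA(X_1,\ldots,X_N;\mathbb{K})$ can fail to be dense, so the range condition must be paired with the scalar density hypothesis. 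Moreover your fallback ("the same scheme without the $\BS$ upgrade") is circular as stated, since that scheme itself begins by fixing $e_0^*$. The gap closes because your standing hypothesis does give density of $\NA(X_1,\ldots,X_N;\mathbb{K})$ (every map in $\BS(X_1,\ldots,X_N;\mathbb{K})$ attains its norm, as $\psi\perp_B 0$ forces $M_\psi\neq\emptyset$), whereupon \cite[Theorem 2.12]{CS} --- or simply the paper's Zizler--Lima route --- yields the density of $\NA(X_1,\ldots,X_N;Y)$ you need; once that is said explicitly, your argument is complete and correct.
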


\begin{proof}The beginning of the proof is from \cite[Theorem 2]{AAP} and \cite[Theorem 2.12]{CS}, but we give details for the completeness. 

Let $\{x_\alpha\}_{\alpha \in I} \subset S_X$ and $\{x_\alpha^*\}_{\alpha \in I} \subset S_{X^*}$ be the corresponding index sets in the definition of property quasi-$\beta$. By \cite[Proposition 4]{Z}, it suffices to show that every $A \in \Lin^N(X_1,\ldots,X_N;Y)$ with $\|A\|=1$ satisfying ${\widetilde{A}}^* \in \NA(Y^*;Z^*)$ can be approximated by $B \in \BS(X_1,\ldots,X_N;Y)$, where $Z$ is the completed projective tensor product space $X_1 \widehat{\otimes}_\pi \cdots \widehat{\otimes}_\pi X_N$ and $\widetilde{A}$ is a linearization of $A$ on $Z$. As in Proposition \ref{alpha}, we see that $\widetilde{A}^*$ attains its norm at some $e^* \in \operatorname{Ext}(B_{Z^*})$ by \cite[Theorem 5.8]{Lima} and choose an index set $I_{e^*} \subset I$ in the definition of property quasi-$\beta$.

For an arbitrary $\eps>0$, take $0<\delta<\dfrac{\eps}{2}$ so that
$$
1+r_{e^*}\left( \frac{\eps}{2} + \delta \right) < (1-\delta) \left( 1 + \frac{\eps}{2} \right).
$$
As we know that $te^* \in \overline{\{x_\alpha^*\}}^{w^*}_{\alpha \in I}$ for some $t \in \mathbb{T}$, there exists $\alpha_0 \in I_{e^*}$ so that 
$$\|\widetilde{A}^*y_{\alpha_0}^*\|>\|A\|-\delta=1-\delta.$$

 For convenience, we consider $\widetilde{A}^*y_{\alpha_0}^*$ as an element in $\Lin^N(X_1,\ldots,X_N;\mathbb{K})$ instead of $\Lin(Z;\mathbb{K})$ according to an isometric correspondence. By the assumption that $\BS(X_1,\ldots,X_N;\mathbb{K})$ is dense in $\Lin^N(X_1,\ldots,X_N;\mathbb{K})$, there exists $\varphi \in \BS(X_1,\ldots,X_N;\mathbb{K})$ with $\|\varphi\|=\|\widetilde{A}^*y_{\alpha_0}^*\|$ such that $\|\varphi-\widetilde{A}^*y_{\alpha_0}^*\|<\delta$. 

For $B \in \Lin^N(X_1,\ldots,X_N;Y)$ defined by
$$
B(\cdot) := A(\cdot) + \left[ \left( 1+\frac{\eps}{2} \right) \varphi(\cdot) - (\widetilde{A}^*y_{\alpha_0}^*)(\cdot) \right] y_{\alpha_0},
$$
we deduce that $\widetilde{B}^*$ attains its norm at $y_{\alpha_0}^*$. Indeed, we have that $\widetilde{B}^*y_{\alpha_0}^* = \left(1 +\dfrac{\eps}{2}\right) \varphi \in \BS(X_1,\ldots,X_N;\mathbb{K})$ and so 
$$
\left\|\widetilde{B}^*y_{\alpha_0}^*\right\|> \left(1 +\frac{\eps}{2}\right)(1-\delta).
$$
On the other hand, for $\alpha \in I \setminus \{\alpha_0\}$,
\begin{align*}
 \|\widetilde{B}^*y_{\alpha}^*\|
&\leq  \left\|\widetilde{A}^*y_{\alpha}^*\right\| + \left( \left\|\frac{\eps}{2} \varphi \right\|+\left\| \varphi- \widetilde{A}^*y_{\alpha_0}^* \right\| \right) \left|y_{\alpha}^*(y_{\alpha_0})\right|\\
&\leq 1 + r_{e^*} \left( \frac{\eps}{2} + \delta \right)<\left(1 +\frac{\eps}{2}\right)(1-\delta).
\end{align*}

Since we already have that $\|B-A\|<\dfrac{\eps}{2}+\delta<\eps$, it remains to show that $B\in\BS(X_1,\ldots,X_N;Y)$. To do so, it is enough to prove that $\widetilde{B}^*y_{\alpha_0}^* \perp_B \widetilde{C}^*y_{\alpha_0}^*$ for an arbitrary nonzero $C \in \Lin^N(X_1,\ldots,X_N;Y)$ with $B \perp_B C$. In other words, since $\widetilde{B}^*y_{\alpha_0}^* \in \BS(X_1,\ldots,X_N;\mathbb{K})$, there exists $(x_1,\ldots,x_N) \in M_{\widetilde{B}^*y_{\alpha_0}^*}$ such that $\widetilde{B}^*y_{\alpha_0}^*(x_1,\ldots,x_N) \perp_B \widetilde{C}^*y_{\alpha_0}^*(x_1,\ldots,x_N)$ . This gives that 
$$y_{\alpha_0}^*(B(x_1,\ldots,x_N))=\left\|\widetilde{B}^*y_{\alpha_0}^*\right\|=\|B\| \quad \text{and} \quad y_{\alpha_0}^*(C(x_1,\ldots,x_N))=0$$
 which deduce $(x_1,\ldots,x_N) \in M_B$ and $B(x_1,\ldots,x_N) \perp_B C(x_1,\ldots,x_N)$.

To prove the claim, we assume that $B \perp_B C$. From (iii) of property quasi-$\beta$, we have that
$$
\|\widetilde{B}^* + \lambda \widetilde{C}^*\| = \sup_{\alpha \in I} \left\| \widetilde{B}^*y_\alpha^* + \lambda \widetilde{C}^*y_\alpha^* \right\| \quad \text{for any } \lambda \in \mathbb{K}.
$$
Thus for $0 < |\lambda| < \dfrac{1}{\|C\|} \left[ (1-\delta) \left( 1 + \dfrac{\eps}{2} \right) - \left( 1 + r_{e^*} \left( \dfrac{\eps}{2} + \delta \right) \right) \right]$, we have

\begin{align*}
\sup_{\alpha \in I \setminus \{\alpha_0\}} \|\widetilde{B}^*y_\alpha^* + \lambda \widetilde{C}^*y_\alpha^*\| 
&\leq \sup_{\alpha \in I \setminus \{\alpha_0\}}\|\widetilde{B}^*y_\alpha^*\|+ |\lambda|\sup_{\alpha \in I \setminus \{\alpha_0\}}\| \widetilde{C}^*y_\alpha^*\| \\
&<\left( 1 + r_{e^*} \left( \frac{\eps}{2} + \delta \right) \right) +\left[ (1-\delta) \left( 1 + \frac{\eps}{2} \right) - \left( 1 + r_{e^*} \left( \frac{\eps}{2} + \delta \right) \right) \right]\\
&= (1-\delta)\left( 1 + \frac{\eps}{2} \right)\\
&< \|\widetilde{B}^*y_{\alpha_0}^*\|.
\end{align*}
Since $B \perp_B C$ implies $\widetilde{B}^* \perp_B \widetilde{C}^*$ which means that $\|\widetilde{B}^* + \lambda \widetilde{C}^*\| \geq \|\widetilde{B}^*\|$ for every $\lambda \in \mathbb{K}$, we have
$$
\|\widetilde{B}^*y_{\alpha_0}^*\| \leq \|\widetilde{B}^*y_{\alpha_0}^* + \lambda \widetilde{C}^*y_{\alpha_0}^*\| \quad \text{for } \lambda \in \mathbb{K}
$$
 by convexity of the norm, and so we finish the proof.
\end{proof}

It is shown in \cite{KL} that $\BS(X;\R)$ is dense in $\Lin(X;\R)$ whenever $X$ is locally uniformly convex. It is not difficult to see that the underlying base field can be extended to the complex plane for an analogous result, so we have the following immediate result.

\begin{cor}\label{cor:LUR-beta}
Let $X$ be a locally uniformly convex Banach space and $Y$ be a Banach space with property quasi-$\beta$. Then, $\BS(X;Y)$ is dense in $\Lin(X;Y)$.
\end{cor}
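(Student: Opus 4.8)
The plan is to obtain this as a genuine corollary of the preceding proposition (denseness under property quasi-$\beta$ of the range space), so the real content is to verify its hypothesis: that $\BS(X;\mathbb{K})$ is dense in $\Lin(X;\mathbb{K})=X^{*}$ whenever $X$ is locally uniformly convex. Granting this, one simply applies that proposition with $N=1$, $X_{1}=X$, and the given $Y$ with property quasi-$\beta$, and concludes that $\BS(X;Y)$ is dense in $\Lin(X;Y)$.

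To establish the scalar-valued denseness I would combine the Bishop--Phelps theorem with Proposition \ref{prop:one-point-BS}. Since norm-attaining functionals are dense in $X^{*}$, it is enough to show that every $x^{*}\in S_{X^{*}}$ which attains its norm, say at $x_{0}\in S_{X}$, has the $\BS$ property. Local uniform convexity implies strict convexity, which forces $M_{x^{*}}=\{\theta x_{0}:\theta\in\mathbb{T}\}$: if $z\in S_{X}$ with $|x^{*}(z)|=1$, then after multiplying $z$ by a unimodular scalar we may assume $x^{*}(z)=1$, and then $x^{*}\bigl((z+x_{0})/2\bigr)=1$ gives $\|z+x_{0}\|=2$, so $z=x_{0}$ by strict convexity. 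Thus $x^{*}$ meets the first hypothesis of Proposition \ref{prop:one-point-BS} with $N=1$. For the second (the ``strong attainment'' condition), if $\{z_{j}\}\subset S_{X}$ satisfies $|x^{*}(z_{j})|\to 1$, choose unimodular $\theta_{j}$ with $x^{*}(\theta_{j}z_{j})\to 1$; then $\|\theta_{j}z_{j}+x_{0}\|\geq 2\re x^{*}\bigl((\theta_{j}z_{j}+x_{0})/2\bigr)\to 2$, so $\theta_{j}z_{j}\to x_{0}$ by local uniform convexity at $x_{0}$. Hence $\sup_{z\in D}|x^{*}(z)|<\|x^{*}\|$ for every closed $D\subset S_{X}$ with $\dist(M_{x^{*}},D)>0$, and Proposition \ref{prop:one-point-BS} yields $x^{*}\in\BS(X;\mathbb{K})$. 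In the real case $\mathbb{T}=\{1,-1\}$ and this recovers the statement attributed to \cite{KL}; the reason for invoking Proposition \ref{prop:one-point-BS} rather than Theorem \ref{theorem:real-BS} is precisely that the former is stated for both $\R$ and $\C$.

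I do not expect a genuine obstacle here, as the result is formally a corollary; the only point requiring a little care is verifying that local uniform convexity delivers the \emph{uniform} gap estimate $\sup_{z\in D}|x^{*}(z)|<\|x^{*}\|$ on closed sets $D$ bounded away from the circle $\mathbb{T}x_{0}$, rather than merely a pointwise bound — and this is exactly what the sequential argument above provides, with compactness of $\mathbb{T}$ absorbing the unimodular factors. The remaining ingredients, namely the Bishop--Phelps reduction to norm-attaining functionals and the application of the quasi-$\beta$ proposition, are routine.
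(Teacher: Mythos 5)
Your proposal is correct and follows essentially the same route as the paper: the paper also obtains this as an immediate consequence of the preceding quasi-$\beta$ proposition (with $N=1$) once one knows that $\BS(X;\mathbb{K})$ is dense in $X^{*}$ for locally uniformly convex $X$, a fact it cites to \cite{KL} for the real case and asserts extends to $\mathbb{C}$. Your explicit verification of that scalar ingredient via Bishop--Phelps and Proposition \ref{prop:one-point-BS} is sound and simply fills in the detail the paper leaves to the citation.
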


Lindenstrauss showed in \cite{L} that there is a weaker condition than property $\alpha$ but still $\NA(X;Y)$ is dense in $\Lin(X;Y)$ for such $X$. Hence, it can also be asked whether it works as property $\alpha$ is a universal condition of domain spaces for $\BS(X;Y)$ to be dense in $\Lin(X;Y)$. Recall that a subset $\{x_\alpha\}_{\alpha \in I} \subset S_X$ for some index set $I$ is said to be \emph{uniformly exposed} if there exists $\{x_\alpha^*\}_{\alpha \in I} \subset S_{X^*}$ such that $x_\alpha^*(x_\alpha)=1$ for all $\alpha \in I$ and that for any given $\eps>0$, there is $\delta>0$ so that $\|x-x_\alpha\|<\eps$ whenever $\alpha \in I$ and $x \in B_X$ satisfy $\re x_\alpha^*(x)>1-\delta$.

\begin{question}
Let $X$ be a Banach space such that $B_X$ is the absolutely closed convex hull of a uniformly exposing set. Is $\BS(X;Y)$ dense in $\Lin(X;Y)$ for every Banach space $Y$?
\end{question}

\section{Operators with the Bhatia-\v{S}emrl property on $c_0$}\label{section:c0}

We focus on the case of operators defined on the null sequence space $c_0$. In \cite{K}, the author showed that $\BS(c_0;Y)$ only consists of a zero operator whenver $Y$ is a strictly convex Banach space, and it is applied to show that $\BS(c_0;c_0)$ is not dense in $\Lin(c_0;c_0)$. We improve these results in further ways to observe that the operators with the $\BS$ property do not play well when the domain space is $c_0$. In this section, for a set $A\subset \mathbb{N}$, the notion $P_A \colon c_0 \to \ell_\infty^A \subset c_0$ denotes the canonical projection on the components in $A$. 
\begin{theorem}\label{mainprop}
Let $Y$ be any Banach space and $T \in \NA(c_0;Y)$ be given. If there is a finite set $A \subset \N$ so that $\|TP_A\|=\|T\|$ and $\|T(I-P_A)\|<\|T\|$, then $T$ does not have the $\BS$ property.
\end{theorem}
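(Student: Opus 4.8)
The plan is to produce a single operator $S\in\Lin(c_0;Y)$ with $T\perp_B S$ that no norm attaining point of $T$ can certify, i.e.\ $Tx_0\not\perp_B Sx_0$ for every $x_0\in M_T$. Normalize $\|T\|=1$ and put $\rho:=\|T(I-P_A)\|<1$. Since $A$ is finite, the restriction of $T$ to the finite-dimensional subspace $\ell_\infty^A\subseteq c_0$ attains its norm $\|TP_A\|=1$ at some $\hat u\in S_{\ell_\infty^A}\subseteq S_{c_0}$; set $v:=T\hat u\in S_Y$ and fix $y^*\in S_{Y^*}$ with $y^*(v)=1$. Put $\psi:=T^*y^*\in\ell_1=c_0^{\,*}$; from $\psi(\hat u)=y^*(v)=1$ together with $\|\hat u\|_\infty=1$ and $\|\psi\|_1\le1$ one reads off that $\|\psi\|_1=1$ and $\supp\psi\subseteq A$. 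Finally, since $\N\setminus A$ is infinite, fix $\phi\in\ell_1$ with $\supp\phi=\N\setminus A$ and $\|\phi\|_1=1$. The candidate is
$$S\colon x\longmapsto TP_Ax+\phi(x)\,v,$$
which is bounded and nonzero, since $S$ agrees with $T$ on vectors supported in $A$, so $\|S\|\ge\|TP_A\|=1$.

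To see $T\perp_B S$, evaluate $y^*$ on $(T+\lambda S)x$: using $y^*(v)=1$, $y^*(Tx)=\psi(x)$ and $\supp\psi\subseteq A$ (so $y^*(TP_Ax)=\psi(P_Ax)=\psi(x)$), one gets $y^*\big((T+\lambda S)x\big)=(1+\lambda)\psi(x)+\lambda\phi(x)$ for all $x\in c_0$, $\lambda\in\mathbb{K}$. Taking the supremum over $x\in B_{c_0}$, using $\sup_{x\in B_{c_0}}|\eta(x)|=\|\eta\|_1$ for $\eta\in\ell_1$ and that $\psi,\phi$ have disjoint supports, gives
$$\|T+\lambda S\|\;\ge\;\|(1+\lambda)\psi+\lambda\phi\|_1\;=\;|1+\lambda|\,\|\psi\|_1+|\lambda|\,\|\phi\|_1\;=\;|1+\lambda|+|\lambda|\;\ge\;1=\|T\|$$
for every $\lambda$, i.e.\ $T\perp_B S$.

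Now suppose toward a contradiction that some $x_0\in M_T$ satisfies $Tx_0\perp_B Sx_0$. By James' characterization there is $z^*\in S_{Y^*}$ with $z^*(Tx_0)=\|Tx_0\|=1$ and $z^*(Sx_0)=0$. Let $\chi:=T^*z^*\in\ell_1$, so $\|\chi\|_1\le1$ and $\chi(x_0)=1$; since $\|x_0\|_\infty=1$ this forces $\|\chi\|_1=1$ and $\supp\chi\subseteq G:=\{n:|x_0(n)|=1\}$ with $\chi(n)x_0(n)=|\chi(n)|$ on $G$, and $G$ is finite because $x_0\in c_0$. Three consequences: $z^*(TP_Ax_0)=\sum_{n\in A}\chi(n)x_0(n)=\sum_{n\in A}|\chi(n)|=:\|\chi|_A\|_1$; the restriction of $\chi=T^*z^*$ to $\N\setminus A$ equals $(T(I-P_A))^*z^*$, of $\ell_1$-norm $\le\rho$, so $\|\chi|_A\|_1\ge1-\rho>0$; and $z^*(v)=\chi(\hat u)$ with $|\chi(\hat u)|\le\|\chi|_A\|_1$ because $\hat u$ is supported in $A$ with $\|\hat u\|_\infty\le1$. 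Substituting into $0=z^*(Sx_0)=z^*(TP_Ax_0)+\phi(x_0)z^*(v)=\|\chi|_A\|_1+\phi(x_0)\chi(\hat u)$ yields $\|\chi|_A\|_1=|\phi(x_0)\chi(\hat u)|\le|\chi(\hat u)|\le\|\chi|_A\|_1$, hence (as $\|\chi|_A\|_1>0$) $|\phi(x_0)|=\|\phi\|_1=1$. But $\|\phi\|_1=1$ and $|\phi(x_0)|=1$ force $|x_0(n)|=1$ for all $n\in\supp\phi$, an infinite set — contradicting $x_0\in c_0$. Therefore no $x_0\in M_T$ works, and $T\notin\BS(c_0;Y)$.

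The only genuinely delicate choice is that of $S$. A pure rank-one operator $x\mapsto\eta(x)v$ orthogonal to $T$ cannot do the job, because $M_T$ may contain a point $x_0$ whose peaking functional $z^*$ annihilates $v$, making $z^*(Sx_0)=0$ for free; the summand $TP_A$ is precisely what injects the uncancellable positive quantity $\|\chi|_A\|_1\ge1-\rho$ into $z^*(Sx_0)$. Finiteness of $A$ is used twice: to obtain $\hat u$ (and hence $\psi$ supported in $A$, which drives the orthogonality computation) and to ensure $\N\setminus A$ is infinite, which is what makes the final $c_0$-versus-$\ell_1$ incompatibility bite. Everything else is a short estimate.
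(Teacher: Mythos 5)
Your proof is correct. It is built from the same raw materials as the paper's: a witness operator of the form $(\pm TP_A) + \phi(\cdot)\,T\hat u$, where $\hat u\in S_{c_0}$ is a norm-attaining point supported in $A$ and $\phi\in S_{\ell_1}$ is supported off $A$ (the paper takes $\phi=\sum_{i>r}2^{r-i}e_i^*$ and the sign $-$; you take an arbitrary such $\phi$ and the sign $+$). That sign flip changes the mechanism of the second half in an interesting way. In the paper, $U=-TP_A+\phi(\cdot)T\hat u$ cancels $T$ on the coordinates in $A$, so $(T+U)z=(T+U)(I-P_A)z$ and a direct convexity estimate gives $\|(T+U)(I-P_A)x\|<\|T\|$, whence $\|Tz+\lambda Uz\|=\|(1-\lambda)Tz+\lambda(T+U)(I-P_A)z\|<\|Tz\|$ for small $\lambda>0$; no dual functionals are needed at a candidate point $z\in M_T$. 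With your $S=TP_A+\phi(\cdot)v$ there is no such cancellation, so you instead argue by contradiction through a James functional $z^*$ at $Tx_0$, exploiting the $\ell_1$-structure of $\chi=T^*z^*$: the equality case of the triangle inequality pins $\supp\chi$ to the unimodular coordinates of $x_0$, the hypothesis $\|T(I-P_A)\|<\|T\|$ forces $\|\chi|_A\|_1\geq 1-\rho>0$, and the equation $z^*(Sx_0)=0$ then forces $|\phi(x_0)|=1$, which is incompatible with $x_0\in c_0$ because $\supp\phi$ is infinite. Your orthogonality computation is also cleaner than the paper's: instead of evaluating $T+\lambda U$ along the sequence $x_0+\sum_{i=r+1}^{m}e_i$, you compute $\|y^*\circ(T+\lambda S)\|_{\ell_1}=|1+\lambda|+|\lambda|\geq 1$ directly from the disjointness of $\supp\psi$ and $\supp\phi$ (the fact $\supp\psi\subseteq A$ that you extract from the equality case is exactly the paper's identity $y_0^*\bigl(T(x_0\pm(I-P_A)x)\bigr)=\|T\|$). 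Both routes are valid; the paper's is more elementary at the candidate points, yours localizes all the work in dual space and makes the role of the hypothesis $\|T(I-P_A)\|<\|T\|$ (as the lower bound $\|\chi|_A\|_1\geq 1-\rho$) more transparent.
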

\begin{proof}
Since $TP_A$ can be considered as an operator defined on a finite dimensional space, it attains its norm at some $x_0\in S_{c_0}$ whose support belongs to $A$ which means that $x_0=P_Ax_0$.

 To prove the statement, we construct an operator $U\in \Lin(c_0;Y)$ such that $T$ is orthogonal to $U$ in the sense of Birkhoff-James but $Tz$ is not orthogonal to $Uz$ for any norm attaining point $z \in M_T$. The desired operator is defined by
\begin{displaymath}
Ue_i:=\left\{\begin{array}{@{}cl}
\displaystyle \, -Te_i & \text{if } i \in A \\
\displaystyle \, 2^{r-i}Tx_0 & \text{if } i > r \\
\displaystyle \, 0 & \text{otherwise},
\end{array} \right.
\end{displaymath}
where $r$ is the largest element in $A$ and $\{e_i\}_{i=1}^\infty$ is the canonical basis of $c_0$. We comment that this is the one that firstly considered in the proof of \cite[Theorem 2.6]{K}.

To show that $T \perp_B U$, take a functional $y_0^*\in S_{Y^*}$ so that $y_0^*(Tx_0)=\|T\|$. Since we have
$$
\left|y_0^*\left(T(x_0\pm(I-P_A)x)\right)\right|\leq \|T\|  \text{~and~} \|x_0\pm(I-P_A)x\|=1
$$
for any $x \in B_{c_0}$, we obtain
$$
y_0^*\left(T(x_0\pm(I-P_A)x)\right)=\|T\|
$$
by the equality 
$$\|T\|=y_0^*(Tx_0)=\frac{y_0^*\left(T(x_0+(I-P_A)x)\right)+y_0^*\left(T(x_0-(I-P_A)x)\right)}{2}.$$
For arbitrary $\lambda\in \mathbb{K}$ and $m>r$, we have
\begin{align*}
y_0^*\left[\left(T+\lambda  U\right)\left(x_0+\sum_{i=r+1}^m e_i\right)\right] &=y_0^*\left[T\left(x_0+(I-P_A)\left(\sum_{i=r +1 }^m e_i\right)\right)\right]+ \lambda y_0^*(Ux_0)+\lambda y_0^*\left(\sum_{i=r +1 }^m Ue_i\right)\\
&=\|T\|-\lambda\|T\|\left(1-\sum_{i=1 }^{m-r} 2^{-i}\right).
\end{align*}
By letting $m$ goes to infinity, we get $\|T+\lambda U\|\geq \|T\|$ and so  $T \perp_B  U$.

Now we claim that $Tz$ is not orthogonal to $Uz$ in the sense of Birkhoff-James for any norm attaining point $z \in M_T$. In that case, it suffices to show that
$$
\|(T+U)(I-P_A)x\|<\|T\| \quad \text{for any } x=\bigl(x(i)\bigr)_{i=1}^\infty\in B_{c_0}.
$$
If the claim is true, we have for $0<\lambda<1$ that
$$\|Tz+\lambda Uz\|=\|(1-\lambda)Tz+\lambda (T+U)(I-P_A)z\|<\|T\|=\|Tz\|$$
 which implies $Tz$ is not orthogonal to $Uz$ in the sense of Birkhoff-James.

For $\alpha=\sum_{i>r}2^{r-i}x(i)$, we have 
\begin{eqnarray*}
\|(T+U)(I-P_A)x\|
&=&\|T(I-P_A)x+U(I-P_A)x\|\\
&=&\left\|T(I-P_A)x+\alpha Tx_0\right\|\\
&=&\left\|T\left(\alpha x_0+(I-P_A)x\right)\right\|
\end{eqnarray*}
Note that $|\alpha|<1$. If $\alpha=0$, it then follows that
$$
\|(T+U)(I-P_A)x\|=\left\|T(I-P_A)x\right\|<\|T\|,
$$
so there is nothing to prove. Otherwise, $\sgn(\alpha) =\dfrac{\alpha}{|\alpha|}$ is well-defined, and so we deduce the following inequality
\begin{align*}
\|(T+U)(I-P_A)x\| &= \left\|T\left(\alpha x_0+(I-P_A)x\right)\right\|\\
&=\left\|T\left(|\alpha| x_0+\overline{\sgn(\alpha)}(I-P_A)x\right)\right\|\\
&=\left\|T\left(|\alpha|\left(x_0+\overline{\sgn(\alpha)}(I-P_A)x\right)\right)+T\left(\left(1-|\alpha|\right)\overline{\sgn(\alpha)}(I-P_A)x\right)\right\|\\
&=\left\||\alpha|T\left(x_0+\overline{\sgn(\alpha)}(I-P_A)x\right)+\left(1-|\alpha|\right)\overline{\sgn(\alpha)}T(I-P_A)x\right\|\\
&<|\alpha|\|T\|+(1-|\alpha|)\|T\|=\|T\|,
\end{align*}
as we claimed.
\end{proof}

The next lemma allows us to deal with many cases for norm attaining operators which admit the first condition of Theorem \ref{mainprop}.

\begin{lemma}\label{basic}
Let $Y$ be any Banach space and $T \in \NA(c_0;Y)$ be given. Then, there exists a finite set $A \subset \N$ such that $\|TP_A\|=\|T\|$.
\end{lemma}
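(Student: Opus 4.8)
The plan is to extract the finite set $A$ directly from a norm-attaining point of $T$. Since $T \in \NA(c_0;Y)$, fix $x_0 \in S_{c_0}$ with $\|Tx_0\|=\|T\|$, and normalize so that $\|T\|=1$. Because $x_0=(x_0(i))_i\in c_0$ its coordinates tend to $0$, so the set $A:=\{i\in\N:\ |x_0(i)|=1\}$ is \emph{finite}; it is nonempty since $\|x_0\|_\infty=1$ is attained in $c_0$. Writing $y_i:=Te_i$ so that $Tx=\sum_i x(i)y_i$ for $x\in c_0$, the goal is to prove $\|TP_A\|=\|T\|$; as $\|TP_A\|\leq\|P_A\|\,\|T\|\leq\|T\|$ is automatic, only the reverse inequality needs argument.

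The key step will be to show that a norming functional for $Tx_0$ annihilates $y_i$ for every $i\notin A$. Choose $y_0^*\in S_{Y^*}$ with $y_0^*(Tx_0)=\|T\|=1$. Then
$$1=y_0^*(Tx_0)=\sum_i x_0(i)\,y_0^*(y_i)\leq\sum_i |x_0(i)|\,|y_0^*(y_i)|\leq\sum_i|y_0^*(y_i)|\leq1,$$
where the last inequality holds because, choosing unimodular scalars $\eps_i$ with $\eps_i y_0^*(y_i)=|y_0^*(y_i)|$, one has $\sum_{i=1}^n|y_0^*(y_i)|=y_0^*\bigl(T\sum_{i=1}^n\eps_i e_i\bigr)\leq 1$ for every $n$ (and the nonnegative series $\sum_i|y_0^*(y_i)|$ converges, having bounded partial sums). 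Hence all of these are equalities; in particular $\sum_i(1-|x_0(i)|)\,|y_0^*(y_i)|=0$, and since every summand is nonnegative, $|x_0(i)|<1$ forces $y_0^*(y_i)=0$. Thus $y_0^*(y_i)=0$ for all $i\notin A$.

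To finish, I would test $TP_A$ at $z:=P_Ax_0$. Since $A\neq\varnothing$ and $|x_0(i)|=1$ on $A$, we have $z\in S_{c_0}$ and $P_Az=z$, so
$$\|TP_A\|\geq\|TP_A z\|=\|Tz\|\geq|y_0^*(Tz)|=\Bigl|\sum_{i\in A}x_0(i)\,y_0^*(y_i)\Bigr|=\Bigl|\sum_{i\in\N}x_0(i)\,y_0^*(y_i)\Bigr|=|y_0^*(Tx_0)|=1,$$
the third-from-last equality using that $y_0^*(y_i)=0$ for $i\notin A$. Combined with $\|TP_A\|\leq\|T\|=1$, this gives $\|TP_A\|=\|T\|$.

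I do not expect a serious obstacle: the only delicate point is the passage from norm attainment to the vanishing of $y_0^*$ on the ``tail'' coordinates $i\notin A$, handled by the displayed chain of equalities; everything else is bookkeeping. An alternative, functional-free route would be to write $x_0=P_Ax_0+v$ with $v=(I-P_A)x_0$ and $\rho:=\|v\|_\infty<1$ (the case $v=0$ being immediate): for $|\lambda|\leq 1/\rho$ the disjointly supported combination $P_Ax_0+\lambda v$ lies in $S_{c_0}$, so $\lambda\mapsto\|T(P_Ax_0+\lambda v)\|$ is a convex, in fact subharmonic, function on a disk attaining its maximum $\|T\|$ at the interior point $\lambda=1$, hence is constant; evaluating at $\lambda=0$ yields $\|T(P_Ax_0)\|=\|T\|$, and the same conclusion follows.
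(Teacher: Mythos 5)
Your proposal is correct, and your primary argument takes a genuinely different route from the paper's. The paper does not try to locate the exact set where $|x_0(i)|=1$; it simply picks $m$ with $|x_0(i)|<1/2$ for $i>m$, sets $A=\{1,\dots,m\}$, observes that both $x_0\pm(I-P_A)x_0$ lie in $B_{c_0}$ (doubling coordinates smaller than $1/2$ keeps them below $1$), and uses the midpoint identity $Tx_0=\tfrac12\bigl(T(x_0+(I-P_A)x_0)+T(x_0-(I-P_A)x_0)\bigr)$ together with $\|Tx_0\|=\|T\|$ to force $\|TP_Ax_0\|=\|T\|$. That is a three-line convexity argument, and your ``alternative, functional-free route'' at the end is essentially this same argument in continuous form (convexity of $\lambda\mapsto\|T(P_Ax_0+\lambda(I-P_A)x_0)\|$ on a disk with $\lambda=1$ interior; the paper's version is the two-point instance $\lambda=0,2$). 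Your main argument instead passes through the duality $c_0^*=\ell_1$: you show the norming functional $y_0^*\circ T=(y_0^*(Te_i))_i\in\ell_1$ has norm exactly $1$ and is supported on $A=\{i:|x_0(i)|=1\}$, because equality in $\sum_i|x_0(i)|\,|y_0^*(y_i)|=\sum_i|y_0^*(y_i)|$ kills every coordinate with $|x_0(i)|<1$. This costs slightly more machinery but buys a sharper conclusion: the minimal canonical set $\{i:|x_0(i)|=1\}$ already works, and you get the extra structural information that the norming functional annihilates $Te_i$ for all $i$ outside it. All the delicate points in your chain (convergence of $\sum_i x_0(i)y_0^*(y_i)$, the bound $\sum_i|y_0^*(y_i)|\leq 1$ via unimodular sign choices, and the term-by-term vanishing) are handled correctly.
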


\begin{proof}
Let $T$ attains its norm at $x_0=\bigl(x_0(i)\bigr)_{i=1}^\infty \in S_{c_0}$. Since $x_0(i)$ converges to $0$, we can take $m \in \N$ so that $|x_0(i)|<1/2$ for $i\geq m$. Since $\left\|x_0\pm (I-P_{\{1,\ldots,m\}})x_0\right\|\leq 1$ and
$$Tx_0=\frac{\left(Tx_0+T(I-P_{\{1,\ldots,m\}})x_0\right)+\left(Tx_0-T(I-P_{\{1,\ldots,m\}})x_0\right)}{2},$$
we have
$$
\left\|TP_{\{1,\ldots,m\}}x_0\right\|=\left\|Tx_0- T(I-P_{\{1,\ldots,m\}})x_0\right\|= \|T\|.
$$
Hence, $A=\{1,\ldots,m\}$ is the desired set. 
\end{proof}

We are now able to generalize \cite[Theorem 2.6]{K} by obtaining that the set of operators with the $\BS$ property is never dense in $\Lin(c_0;Y)$ for arbitrary range spaces $Y$. Note that in the case of norm attaining operators, $\NA(c_0;Y)$ is dense in $\Lin(c_0;Y)$ when $Y$ is (complex) uniformly convex (see \cite{A}).

\begin{cor}\label{cor:c_0-BS-property-not-dense}
Let $Y$ be any nontrivial Banach space. Then, $\BS(c_0;Y)$ is not dense in $\Lin(c_0;Y)$.
\end{cor}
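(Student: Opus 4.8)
The plan is to produce one explicit operator $T_0 \in \Lin(c_0;Y)$ and a radius $\rho>0$ such that the open ball of radius $\rho$ about $T_0$ is disjoint from $\BS(c_0;Y)$; this is exactly the assertion that $\BS(c_0;Y)$ is not dense. Using that $Y$ is nontrivial, I fix $y_0 \in S_Y$ and take the norm-one rank-one operator $T_0 x := x(1)\,y_0$, where $x(1)$ denotes the first coordinate of $x \in c_0$, and I set $\rho=\tfrac12$. The decisive feature of this choice is that $T_0(I-P_A)=0$ for every $A\subset\N$ with $1\in A$, since then $(I-P_A)x$ has vanishing first coordinate.

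Next I would fix an arbitrary $T$ with $\|T-T_0\|<\tfrac12$ and split into two cases, after recording that $\|Te_1\|\geq 1-\|T-T_0\|>\tfrac12$, so that $T\neq 0$ and $\|T\|>\tfrac12$. If $T\notin\NA(c_0;Y)$, then $M_T=\emptyset$; taking $S=0$ (for which $T\perp_B S$ holds trivially) there is no $x_0\in S_{c_0}$ with $\|Tx_0\|=\|T\|>0$, so $T$ fails the $\BS$ property. Hence it remains to treat $T\in\NA(c_0;Y)$.

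For such $T$, Lemma \ref{basic} gives a finite set $A_0\subset\N$ with $\|TP_{A_0}\|=\|T\|$; enlarging $A_0$ to $A_0\cup\{1\}$ keeps it finite and does not change $\|TP_{A_0}\|$ (it stays between $\|TP_{A_0}\|$ and $\|T\|$), so we may assume $1\in A_0$. Since $I-P_{A_0}$ is a norm-one projection and $T_0(I-P_{A_0})=0$, we get
$$\|T(I-P_{A_0})\| = \|(T-T_0)(I-P_{A_0})\| \leq \|T-T_0\| < \tfrac12 < \|T\|.$$
Thus $T$ satisfies the hypotheses of Theorem \ref{mainprop} with $A=A_0$, so $T\notin\BS(c_0;Y)$. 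Combining the two cases, $\{T:\|T-T_0\|<\tfrac12\}\cap\BS(c_0;Y)=\emptyset$, which proves the corollary.

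The only non-routine decision here is the choice of $T_0$: it must be supported on finitely many coordinates (here just the first) so that, via Lemma \ref{basic}, every norm-attaining $T$ in a fixed ball about $T_0$ automatically admits a finite set $A_0\ni 1$ with $\|TP_{A_0}\|=\|T\|$ and — because $T_0(I-P_{A_0})=0$ — also $\|T(I-P_{A_0})\|<\|T\|$, making Theorem \ref{mainprop} applicable uniformly over the whole ball. The separate handling of operators in the ball that are not norm attaining is an easy but necessary observation (a nonzero non-norm-attaining operator is never in $\BS$). I do not expect any genuine obstacle beyond lining up these two cases.
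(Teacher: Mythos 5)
Your proposal is correct and follows essentially the same route as the paper: both arguments center a ball at a finitely supported operator, invoke Lemma \ref{basic} to produce a finite set $A$ with $\|TP_A\|=\|T\|$, observe that $\|T(I-P_A)\|\leq\|T-T_0\|<\|T\|$ because the center annihilates $I-P_A$, and then apply Theorem \ref{mainprop}. The only differences are cosmetic --- you use a specific rank-one center and radius $\tfrac12$ where the paper uses an arbitrary nonzero finite-support operator and radius $\tfrac12\|T\|$, and you spell out the (trivial but worth noting) case of non-norm-attaining operators in the ball, which the paper leaves implicit.
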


\begin{proof}
Fix a nonzero operator $T \in \Lin(c_0;Y)$ whose support $B$ is finite. Then it is clear that
$$
\|TP_B\|=\|T\| \quad \text{and} \quad \|T(I-P_B)\|=0.
$$
Let $S$ be a norm attaining operator so that $\|S-T\|<\dfrac{1}{2}\|T\|$ which gives $\|S\|>\dfrac{1}{2}\|T\|$. From Lemma \ref{basic}, there exists a finite set $C\subset \mathbb{N}$ so that $\|SP_C\|=\|S\|$.

Hence, for $A=B\cup C$, we have $\left\|SP_A\right\|=\|S\|$ and that 
$$\left\|S(I-P_A)\right\|\leq \left\|S(I-P_B)\right\|\leq\left\|T(I-P_B)\right\|+\left\|(S-T)(I-P_B)\right\|<\frac{1}{2}\|T\|.$$
By Theorem \ref{mainprop}, $S$ does not have the $\BS$ property. Since $S$ is arbitrary, there is no operator with the $\BS$ property whose distance from $T$ is less than $\dfrac{1}{2}\|T\|$.
\end{proof}

One may notice that Corollary \ref{cor:c_0-BS-property-not-dense} induces the following negative result on the denseness of $N$-linear maps.

\begin{cor}
Let $X_2,\ldots,X_N$ and $Y$ be nontrivial Banach spaces. Then, $\BS(c_0,X_2,\ldots,X_N;Y)$ is not dense in $\Lin^N(c_0,X_N,\ldots,X_N;Y)$.
\end{cor}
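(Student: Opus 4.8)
The plan is to reduce the statement to the operator case already settled in Corollary~\ref{cor:c_0-BS-property-not-dense}, via the canonical isometric identification
$$\Lin^N(c_0,X_2,\ldots,X_N;Y) = \Lin(c_0;Z), \qquad Z := \Lin^{N-1}(X_2,\ldots,X_N;Y),$$
under which an $N$-linear map $A$ corresponds to the operator $T_A$ determined by $A(x_1,\ldots,x_N) = (T_Ax_1)(x_2,\ldots,x_N)$; here $\|A\| = \|T_A\|$, and $Z$ is a nontrivial Banach space since $X_2,\ldots,X_N,Y$ are. Applying Corollary~\ref{cor:c_0-BS-property-not-dense} with range space $Z$, I obtain a nonzero $T \in \Lin(c_0;Z)$ and $\rho = \tfrac12\|T\| > 0$ such that the open ball $B(T,\rho)$ contains no operator with the $\BS$ property. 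Letting $A_0 \in \Lin^N(c_0,X_2,\ldots,X_N;Y)$ be the $N$-linear map with $T_{A_0} = T$, the isometry maps $B(A_0,\rho)$ onto $B(T,\rho)$, so it suffices to show that $B(A_0,\rho)$ contains no $N$-linear map with the $\BS$ property.

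Suppose, toward a contradiction, that some $A \in B(A_0,\rho)$ has the $\BS$ property; note $A \ne 0$ since $\rho < \|A_0\|$. The key step is to show that then $T_A \in \BS(c_0;Z)$. Let $S \in \Lin(c_0;Z)$ satisfy $T_A \perp_B S$ and write $S = T_B$; as $A \perp_B B$ if and only if $T_A \perp_B T_B$, the $\BS$ property of $A$ gives $(x_1,\ldots,x_N) \in M_A$ with $A(x_1,\ldots,x_N) \perp_B B(x_1,\ldots,x_N)$. Since $\|x_i\| = 1$ for every $i$, the estimate $\|A\| = \|A(x_1,\ldots,x_N)\| \le \|T_Ax_1\| \le \|T_A\| = \|A\|$ shows $x_1 \in M_{T_A}$. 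Choose, by James's characterization, $y^* \in S_{Y^*}$ with $y^*\big(A(x_1,\ldots,x_N)\big) = \|A(x_1,\ldots,x_N)\|$ and $y^*\big(B(x_1,\ldots,x_N)\big) = 0$, and define $\psi \in Z^*$ by $\psi(W) := y^*\big(W(x_2,\ldots,x_N)\big)$. Then $\|\psi\| \le 1$, while $\psi(T_Ax_1) = \|A(x_1,\ldots,x_N)\| = \|T_Ax_1\| > 0$ forces $\|\psi\| = 1$; since also $\psi(T_Bx_1) = y^*\big(B(x_1,\ldots,x_N)\big) = 0$, James's characterization yields $T_Ax_1 \perp_B T_Bx_1$. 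Hence $T_A$ has the $\BS$ property.

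But $\|T_A - T\| = \|A - A_0\| < \rho$, so $T_A \in B(T,\rho) \cap \BS(c_0;Z)$, contradicting the choice of $T$ and $\rho$. Therefore $B(A_0,\rho)$ contains no member of $\BS(c_0,X_2,\ldots,X_N;Y)$, and this set is not dense in $\Lin^N(c_0,X_2,\ldots,X_N;Y)$. I expect the only mildly delicate point to be the one-directional transfer of the $\BS$ property from $A$ to $T_A$ in the middle paragraph; no analogue of the explicit operator constructed in the proof of Theorem~\ref{mainprop} is needed, since that construction is already absorbed into Corollary~\ref{cor:c_0-BS-property-not-dense}, and the rest of the argument is a routine transfer through the isometry.
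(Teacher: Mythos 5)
Your proposal is correct and follows essentially the same route as the paper: reduce via the canonical isometry $\Lin^N(c_0,X_2,\ldots,X_N;Y)=\Lin(c_0;\Lin^{N-1}(X_2,\ldots,X_N;Y))$ and invoke Corollary~\ref{cor:c_0-BS-property-not-dense}, the key point being that $A\in\BS(c_0,X_2,\ldots,X_N;Y)$ implies $T_A\in\BS(c_0;\Lin^{N-1}(X_2,\ldots,X_N;Y))$. The paper leaves that implication as ``not difficult to see''; your middle paragraph (the functional $\psi(W)=y^*(W(x_2,\ldots,x_N))$ via James's characterization) is a correct way to verify it.
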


\begin{proof}
The conclusion follows directly from the isometry $\Lin^N(c_0,X_2,\ldots,X_N;Y)=\Lin(c_0;\Lin^{N-1}(X_2,\ldots,X_N;Y))$ and Corollary \ref{cor:c_0-BS-property-not-dense}. Indeed, it is not difficult to see the fact that $A \in \BS(c_0,X_2,\ldots,X_N;Y)$ implies $T_A \in \BS(c_0;\Lin^{N-1}(X_2,\ldots,X_N;Y))$.
\end{proof}

It is remarkable that the compactness of an operator defined on $c_0$ and the $\BS$ property are mutually incompatible. Recall that a bounded linear operator $T \in \Lin(X;Y)$ is said to be \emph{compact} if $T(B_X)$ is relatively compact in $Y$, and we denote by $\K(X;Y)$ the set of all compact operators in $\Lin(X;Y)$. We leave below a sketch of its proof with an easy argument.

\begin{cor}\label{cor:compact-BS}
Let $Y$ be any Banach space. Then, $\BS(c_0;Y) \cap \K(c_0;Y) = \{ 0 \}$.
\end{cor}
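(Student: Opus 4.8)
The strategy is to show that any nonzero compact operator $T \in \Lin(c_0;Y)$ automatically satisfies the hypotheses of Theorem~\ref{mainprop}, so it cannot have the $\BS$ property; combined with the trivial fact that the zero operator always has the $\BS$ property, this yields $\BS(c_0;Y) \cap \K(c_0;Y) = \{0\}$. So fix a nonzero compact $T$ and, aiming at Theorem~\ref{mainprop}, I first need to know that $T$ attains its norm: this is where compactness enters in two ways. A compact operator on $c_0$ is automatically norm-attaining --- indeed $B_{c_0}$ is not weakly compact, but one can argue via $c_0^{**}=\ell_\infty$: the bidual $T^{**}$ attains its norm on $B_{\ell_\infty}$ (which is weak-$*$ compact and $T^{**}$ is weak-$*$-to-norm continuous by compactness of $T$, after Schauder), and then a further short argument brings the norm-attaining point back into $c_0$. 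Alternatively, and more in the spirit of the section, one notes $\|T\| = \lim_n \|T P_{\{1,\ldots,n\}}\|$ since each $P_{\{1,\ldots,n\}}$ has finite rank and $\|T - T P_{\{1,\ldots,n\}}\| = \|T(I - P_{\{1,\ldots,n\}})\| \to 0$ (this last limit is exactly the compactness of $T$ expressed on $c_0$), and each $T P_{\{1,\ldots,n\}}$, being an operator out of a finite-dimensional space, attains its norm at some finitely supported unit vector.

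The second point, and the real substance, is to produce the finite set $A$ with $\|T P_A\| = \|T\|$ and $\|T(I - P_A)\| < \|T\|$. The first equality I get either from Lemma~\ref{basic} (once $T$ is known to attain its norm) or directly from the observation in the previous paragraph that $\|T P_{\{1,\ldots,n\}}\| \to \|T\|$ and in fact equals $\|T\|$ for $n$ large, since $\|T(I-P_{\{1,\ldots,n\}})\| \to 0 < \|T\|$ forces $\|T P_{\{1,\ldots,n\}}\| = \|T\|$ eventually (using that $T$ attains its norm at a finitely supported vector, as above, or the averaging trick of Lemma~\ref{basic}). The strict inequality $\|T(I-P_A)\| < \|T\|$ is then immediate: for $A = \{1,\ldots,n\}$ with $n$ large enough, $\|T(I-P_A)\|$ is as small as we like --- in particular strictly below $\|T\| > 0$ --- precisely because $T$ is compact, so $\|T(I-P_{\{1,\ldots,n\}})\| \to 0$.

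With both hypotheses of Theorem~\ref{mainprop} verified, that theorem gives that $T$ does not have the $\BS$ property. Hence no nonzero compact operator from $c_0$ to $Y$ has the $\BS$ property, while $0 \in \BS(c_0;Y) \cap \K(c_0;Y)$ trivially, completing the proof. The only mild obstacle is the bookkeeping around ``$T$ compact on $c_0$ implies $\|T(I-P_{\{1,\ldots,n\}})\| \to 0$'': this follows because $\{I - P_{\{1,\ldots,n\}}\}$ converges to $0$ uniformly on the (relatively compact) set $T(B_{c_0})$ --- equivalently, the finite-rank operators $T P_{\{1,\ldots,n\}}$ converge to $T$ in operator norm, which is a standard fact about compact operators on $c_0$ arising from the monotone finite-dimensional decomposition of $c_0$. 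Everything else is a direct citation of Theorem~\ref{mainprop} and Lemma~\ref{basic}.
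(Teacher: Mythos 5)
Your reduction to Theorem~\ref{mainprop} is a more direct route than the paper's (which instead approximates a nonzero compact operator by finitely supported ones, using the approximation property of $\ell_1=c_0^*$, and then quotes the ball argument from the proof of Corollary~\ref{cor:c_0-BS-property-not-dense}). However, there is a genuine gap: your claim that a compact operator on $c_0$ is automatically norm-attaining is false. Take $Y=\mathbb{K}$ and $Tx=\sum_{n}2^{-n}x(n)$; this is a rank-one (hence compact) operator with $\|T\|=1$, but $|Tx|=1$ would force $|x(n)|=1$ for all $n$, which is impossible in $c_0$. The same example kills your fallback argument: here $\|TP_{\{1,\dots,n\}}\|=1-2^{-n}<\|T\|$ for every $n$, so $\|TP_{\{1,\dots,n\}}\|\to\|T\|$ does \emph{not} force $\|TP_{\{1,\dots,n\}}\|=\|T\|$ eventually, and Lemma~\ref{basic} is unavailable because its hypothesis $T\in\NA(c_0;Y)$ fails. (The bidual argument only shows that $T^{**}$ attains its norm on $B_{\ell_\infty}$ --- at $(1,1,1,\dots)$ in this example --- and no ``short argument'' can bring that point back into $c_0$.) A minor further quibble: $\|T(I-P_{\{1,\dots,n\}})\|\to0$ is true, but not because $I-P_{\{1,\dots,n\}}$ converges to $0$ uniformly on $T(B_{c_0})$; those projections act on the domain side, so one should argue via the compact adjoint $T^*\colon Y^*\to\ell_1$ (tails of a relatively compact subset of $\ell_1$ vanish uniformly), or via the shrinking basis of $c_0$.

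The gap is repairable with one extra observation, used implicitly throughout the paper: a nonzero operator that does not attain its norm never has the $\BS$ property, since $T\perp_B 0$ always holds and the definition then demands a point $x_0\in S_{c_0}$ with $\|Tx_0\|=\|T\|$. So split into cases: if the nonzero compact $T$ is not norm-attaining you are done immediately; if it is, Lemma~\ref{basic} produces a finite $A_0$ with $\|TP_{A_0}\|=\|T\|$, and enlarging it to $A=\{1,\dots,n\}\supseteq A_0$ with $n$ large enough that $\|T(I-P_A)\|<\|T\|$ (which compactness guarantees) puts you exactly in the hypotheses of Theorem~\ref{mainprop}. With that case distinction inserted, your proof is correct and genuinely different from the paper's approximation argument; it has the advantage of not invoking the approximation property of $\ell_1$, while the paper's version gets the conclusion as an almost free byproduct of Corollary~\ref{cor:c_0-BS-property-not-dense}.
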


\begin{proof}
As $\ell_1=c_0^*$ has the approximation property, every nonzero compact operator can be approximated by finite rank operators \cite{G}. Again, every finite rank operator can be approximated by operators with finite supports. Since we have shown that each open ball (i) whose center is an operator with a finite support and (ii) whose radius is less than half of the operator norm contains no operator with the $\BS$ property in the proof of of Corollary \ref{cor:c_0-BS-property-not-dense}, any nonzero compact operator does not belong to $\BS(c_0;Y)$.
\end{proof}

We recall the complex strict convexity to give more examples. A Banach space $X$ is said to be \emph{complex strictly convex} if for every $x \in S_X$, $\max_{\lambda \in {\mathbb{T}}} \|x+\lambda y\| =1$ implies $y=0$. It is worth to note that the strict convexity implies the complex strict convexity, and these two geometric properties are equivalent for real spaces. This is the reason why we usually consider complex convexity only for complex spaces. For a complex strictly convex Banach space $Y$, we see that every norm attaining operator $T\in \Lin(c_0;Y)$ is compact.  Indeed, by Lemma \ref{basic}, there exists an element $u\in S_{c_0}$ whose support belongs to a finite set $A \subset \N$ such that $\|Tu\|=\|T\|$. Then, it is clear that $\|T\left(u+\lambda (I-P_A)v\right)\|=\|T\|$ for arbitrary $v\in S_{c_0}$ and $\lambda\in \mathbb{T}$ by convexity of the norm. Hence, $x=Tu/\|T\|$ and $y=T(I-P_A)v/\|T\|$ gives that $y=0$ which shows that $T$ has a finite support. 

On the other hand, it is well known that every bounded linear operator from $c_0$ to $Y$ is compact if $Y$ contains no isomorphic copy of $c_0$ such as spaces having the RNP (see \cite[Theorem 6.26]{FHH}). So the preceding two remarks can be summarized by the following result.

\begin{cor}\label{cor:c0-example}
Let $Y$ be a Banach space satisfying one of the following conditions:
\begin{enumerate}
\item[\textup{(a)}] $Y$ contains no subspace isomorphic to $c_0$. In particular, $Y$ has the RNP.
\item[\textup{(b)}] $Y$ is \textup{(}complex\textup{)} strictly convex.
\end{enumerate}
Then, $\BS(c_0;Y)=\{0\}$.
\end{cor}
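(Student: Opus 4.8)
The plan is to split into the two cases (a) and (b), and in each case establish that every norm-attaining operator $T\in\Lin(c_0;Y)$ is compact, so that Corollary \ref{cor:compact-BS} forces $\BS(c_0;Y)=\{0\}$.

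For case (a), I would invoke the classical fact (e.g. \cite[Theorem 6.26]{FHH}) that if $Y$ contains no isomorphic copy of $c_0$, then every bounded operator from $c_0$ into $Y$ is already compact; this applies in particular when $Y$ has the RNP, since RNP spaces cannot contain $c_0$. Hence $\NA(c_0;Y)\subset\K(c_0;Y)$ trivially, and $\BS(c_0;Y)\cap\K(c_0;Y)=\{0\}$ from Corollary \ref{cor:compact-BS} gives the conclusion.

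For case (b), the argument is the one sketched just before the statement: take $T\in\NA(c_0;Y)$, and by Lemma \ref{basic} choose a finite $A\subset\N$ and $u\in S_{c_0}$ with $\supp u\subset A$ and $\|Tu\|=\|T\|$. For any $v\in S_{c_0}$ and $\lambda\in\mathbb{T}$ one has $\|u+\lambda(I-P_A)v\|=1$, so $\|T(u+\lambda(I-P_A)v)\|\le\|T\|$, and averaging over $\pm\lambda$ with a supporting functional (as in Lemma \ref{basic}) shows in fact $\|T(u+\lambda(I-P_A)v)\|=\|T\|$ for every $\lambda\in\mathbb{T}$. Setting $x=Tu/\|T\|\in S_Y$ and $y=T(I-P_A)v/\|T\|$, complex strict convexity yields $y=0$; as $v$ was arbitrary, $T(I-P_A)=0$, so $T$ has finite support and is compact. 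Again Corollary \ref{cor:compact-BS} finishes the proof.

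I do not expect a genuine obstacle here: both cases reduce cleanly to the already-established Corollary \ref{cor:compact-BS} via the implication ``norm-attaining $\Rightarrow$ compact'', and the only mild care needed is the averaging step in case (b) to upgrade the inequality $\|T(u+\lambda(I-P_A)v)\|\le\|T\|$ to an equality so that complex strict convexity can be applied. (One should also note the degenerate case $\|T\|=0$, where $T=0$ trivially, so normalizing by $\|T\|$ is legitimate.)
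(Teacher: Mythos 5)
Your proposal is correct and follows essentially the same route as the paper: the text immediately preceding the corollary establishes that in case (b) every norm-attaining operator on $c_0$ has finite support (via Lemma \ref{basic} and complex strict convexity) and that in case (a) every bounded operator from $c_0$ into $Y$ is compact, and then both cases are closed off by Corollary \ref{cor:compact-BS}. Your extra care with the averaging step and the trivial case $T=0$ is fine but adds nothing beyond the paper's argument.
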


We improve \cite[Theorem 3.4]{K} by showing that there is no nontrivial operator with the $\BS$ property when $Y=c_0$. In this case, neither all the operators satisfy the condition stated in Theorem \ref{mainprop} nor all the norm attaining operators are compact.  For instance, we may consider the identity operator $\Id \in \Lin(c_0;c_0)$.

\begin{prop}
There is no nonzero operator $T \in \Lin(c_0;c_0)$ with the $\BS$ property. That is, $\BS(c_0;c_0) = \{0\}$.
\end{prop}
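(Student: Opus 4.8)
The plan is to prove this by producing, for each nonzero $T\in\Lin(c_0;c_0)$, an explicit $S$ witnessing the failure of the $\BS$ property. The first, easy, reduction: testing the $\BS$ property against $S=0$ (for which $T\perp_B 0$ holds trivially) forces $M_T\neq\emptyset$, so we may assume $T\in\NA(c_0;c_0)$ and, after scaling, $\|T\|=1$. Write $y_m^*:=e_m^*\circ T\in\ell_1=c_0^*$ for the ``rows'' of $T$, so that $\|T\|=\sup_m\|y_m^*\|_1=1$. Picking $x_0\in M_T$ and a coordinate $j_0$ with $|(Tx_0)_{j_0}|=1$, the functional $y_{j_0}^*$ has $\|y_{j_0}^*\|_1=1$ and attains its norm at $x_0$; equality in Hölder's inequality forces $|x_0(k)|=1$ for every $k\in\supp y_{j_0}^*$, hence $B:=\supp y_{j_0}^*$ is a finite set. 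Throughout I will use the description of Birkhoff--James orthogonality in $c_0$ coming from James's characterization: for nonzero $u,v\in c_0$, letting $K:=\{m:|u_m|=\|u\|\}$ (a finite set, since $u\in c_0$), one has $u\perp_B v$ if and only if $0\in\co\{\overline{u_m}\,v_m:m\in K\}$.

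Next I would build the counterexample. Fix $\delta\in(0,1)$ and a functional $\psi=(c_k)_k\in\ell_1$ with $c_k\geq 0$, $\sum_k c_k=\delta$, and $\supp\psi$ an \emph{infinite} subset of $\N\setminus B$. Let $E\in\Lin(c_0;c_0)$ be the rank-one operator $Ex:=\psi(x)e_{j_0}$ and put $S:=\delta T+E$. The $j_0$-th row of $T+\lambda S$ equals $(1+\lambda\delta)y_{j_0}^*+\lambda\psi$, a sum of functionals with disjoint supports, so for every $\lambda\in\mathbb{K}$,
$$\|T+\lambda S\|\ \geq\ \|(1+\lambda\delta)y_{j_0}^*+\lambda\psi\|_1\ =\ |1+\lambda\delta|+|\lambda|\,\delta\ \geq\ \bigl|(1+\lambda\delta)-\lambda\delta\bigr|\ =\ 1\ =\ \|T\|,$$
so $T\perp_B S$; moreover $S\neq 0$ since the $j_0$-th row of $S$ has a nonzero part supported on $B$ (coming from $\delta y_{j_0}^*$) and another supported on $\supp\psi$.

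The main step is to show that no $z\in M_T$ certifies the $\BS$ property, i.e.\ $Tz\not\perp_B Sz$ for every $z\in M_T$. Fix such a $z$ and let $K_z=\{m:|(Tz)_m|=1\}$, which is finite and nonempty. For $m\in K_z$ with $m\neq j_0$, the $m$-th row of $S$ is $\delta y_m^*$, so $(Sz)_m=\delta(Tz)_m$ and $\overline{(Tz)_m}(Sz)_m=\delta>0$. For $m=j_0$, if $j_0\in K_z$, then $\overline{(Tz)_{j_0}}(Sz)_{j_0}=\delta+\overline{(Tz)_{j_0}}\psi(z)$, and here is the crucial point: since $z\in c_0$ while $\supp\psi$ is infinite, only finitely many $k\in\supp\psi$ can have $|z_k|=1$, whence $|\psi(z)|\leq\sum_k c_k|z_k|<\sum_k c_k=\delta$; thus $\re\bigl(\overline{(Tz)_{j_0}}(Sz)_{j_0}\bigr)\geq\delta-|\psi(z)|>0$. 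So every number $\overline{(Tz)_m}(Sz)_m$, $m\in K_z$, lies in the open right half-plane, giving $0\notin\co\{\overline{(Tz)_m}(Sz)_m:m\in K_z\}$ and hence $Tz\not\perp_B Sz$. Therefore $T$ fails the $\BS$ property, and since $T\neq 0$ was arbitrary, $\BS(c_0;c_0)=\{0\}$.

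The only delicate points are the orthogonality description in $c_0$ (routine from James's theorem, using that norm-attaining functionals on $c_0$ are finitely supported) and the strict inequality $|\psi(z)|<\delta$, which is exactly where the infinite support of $\psi$ is indispensable: with a finitely supported $\psi$ one could have $|\psi(z)|=\delta$ and would only get $\re\geq 0$, which is not enough. I expect this last estimate to be the conceptual heart of the argument. I also note that this construction handles every nonzero $T$ uniformly, so Theorem \ref{mainprop} is not actually needed here; alternatively, one could first dispose of the operators to which Theorem \ref{mainprop} applies and run the construction above only in the remaining case (where $\|T(I-P_A)\|=\|T\|$ for every finite $A$ with $\|TP_A\|=\|T\|$, as happens for $\Id$).
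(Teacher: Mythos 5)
Your proof is correct, and it takes a genuinely different route from the paper's. The paper perturbs \emph{every} norming row of $T$: it collects the indices $\Omega=\{i:\|T_i\|=1,\ T_i\in\NA(c_0;\mathbb{K})\}$, attaches to each such row the tail perturbation $R_i$ from the proof of Theorem \ref{mainprop}, and assembles $S=(R_i/i)_{i\in\Omega}$ (the weights $1/i$ ensuring $S$ maps into $c_0$); verifying $\|Tx_0+\lambda Sx_0\|<1$ then requires splitting the coordinates into three groups ($\Phi$, $\Phi^c\cap\Omega$, $\Omega^c$) and estimating each. You instead perturb a \emph{single} row: $S=\delta T+E$ with $E$ rank one, built from a positive $\ell_1$-functional $\psi$ of infinite support disjoint from the (finite) support of the chosen norming row $y_{j_0}^*$. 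The $\delta T$ summand makes every other relevant coordinate of $Sz$ a positive multiple of the corresponding coordinate of $Tz$, so the convex-hull characterization of Birkhoff--James orthogonality in $c_0$ (which the paper never states explicitly, but which is routine from James's theorem) reduces the whole verification to the single strict inequality $|\psi(z)|<\delta$ — and that is exactly where the infinite support of $\psi$ and the fact that $z\in c_0$ enter, the same ``perturbation supported at infinity'' phenomenon the paper exploits via the tails $2^{r_i-j}T_ix_i$. Your argument is more self-contained and shorter (one explicit witness $S$, one estimate), at the cost of introducing the convex-hull criterion; the paper's is longer but recycles the machinery of Theorem \ref{mainprop} and so fits its narrative. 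All the steps you flag as delicate check out: the reduction to $T\in\NA(c_0;c_0)$ via $S=0$, the finiteness of $B=\supp y_{j_0}^*$ from equality in H\"older, the computation $\|(1+\lambda\delta)y_{j_0}^*+\lambda\psi\|_1=|1+\lambda\delta|+|\lambda|\delta\geq 1$ giving $T\perp_B S$, and the strict inequality $\sum_k c_k|z_k|<\delta$ because only finitely many coordinates of $z$ have modulus $1$ while $\supp\psi$ is infinite.
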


\begin{proof}
It is enough to prove that $T \in \NA(c_0;c_0)$ with $\|T\|=1$ does not have the $\BS$ property. We denote each $T_i \in \Lin(c_0;\mathbb{K})$ for $i \in \N$ to be the $n^{\text{th}}$ coordinate projection of $T$. Let
$$
\Omega := \{ i \in \N \colon \|T_i\|=1 \text{ and } T_i \in \NA(c_0;\mathbb{K}) \}.
$$

It follows easily that $\Omega$ is a nonempty set, and each $T_i$ for $i\in \Omega$ has a finite support $A_i$ since $\mathbb{K}$ is strictly convex. For $i \in \Omega$, as in the proof of Theorem \ref{mainprop}, define $R_i \in \Lin(c_0;\mathbb{K})$ by
\begin{displaymath}
R_i(e_j):=\left\{\begin{array}{@{}cl}
\displaystyle \, -T_ie_j & \text{if } j \in A_i \\
\displaystyle \, 2^{r_i-j}T_ix_i & \text{if } j > r_i \\
\displaystyle \, 0 & \text{otherwise},
\end{array} \right.
\end{displaymath}
where $x_i$ is a norm attaining point with the support $A_i$ and $r_i$ is the largest element in $A_i$. From the proof of Theorem \ref{mainprop}, we observe that
$$
T_i \perp_B R_i, \quad \|R_i\| = 2 \quad \text{and} \quad |T_i x + \lambda R_i x| < 1 \text{ for any } x \in M_{T_i} \text{ and }  0< \lambda <1.
$$

Define $S = (S_i)_i \in \Lin(c_0;c_0)$ by $S_i = \dfrac{1}{i} R_i$ if $i \in \Omega$ and $S_i =0$ otherwise. Then, $T \perp_B S$ since for $i \in \Omega$, we have that $T_i \perp R_i$, and thus 
$$
\|T\| = \|T_i\| \leq \left\|T_i + \frac{\lambda}{i} R_i\right\| \leq \|T+\lambda S\| \quad \text{for any } \lambda \in \mathbb{K}.
$$
It remains to show that $Tx_0$ is not orthogonal to $Sx_0$ in the sense of Birkhoff-James for any $x_0 \in M_T$. Define
$$
\Phi := \left\{i \in \N \colon |T_i x_0| =1 \right\}.
$$
Note that $\Phi$ is a finite subset of $\Omega$ and $\sup_{i\in \Phi^c} |T_i x_0|<1$. Moreover, we have the followings:
\begin{enumerate}
\item[\textup{(i)}] $\displaystyle \sup_{i\in \Phi}\left|T_ix_0 + \frac{\lambda}{i} R_i x_0\right| <1$ for any $0< \lambda <1$,
\item[\textup{(ii)}] $\displaystyle \sup_{i \in \Phi^c \cap \Omega}\left|T_ix_0 + \frac{\lambda}{i} R_i x_0\right| <1-|\lambda|$ for any $\lambda \in \mathbb{K} \text{ with } \displaystyle 0< |3\lambda| < 1 - \sup_{i\in \Phi^c} |T_i x_0|$,
\item[\textup{(iii)}] $\displaystyle \sup_{i \in \Omega^c}|T_ix_0 | \leq \sup_{i\in \Phi^c} |T_i x_0|<1. \phantom{\frac{\lambda}{i}}$
\end{enumerate}
Consequently, $\|Tx_0+\lambda Sx_0\| <1$ whenever $0<3\lambda < 1- \sup_{i\in \Phi^c} |T_i x_0|$, which shows that $Tx_0$ is not orthogonal to $Sx_0$ in the sense of Birkhoff-James.
\end{proof}

Finally, as a direct consequence of Corollary \ref{cor:c0-example} we are able to produce an example which distinguishes the $\BS$ property with the typical norm attainment for bilinear forms. Recall from \cite{KLM} that the set of norm attaining bilinear forms on $c_0 \times c_0$ is dense in $\Lin^2(c_0,c_0;\mathbb{K})$.

\begin{example}
There is no nonzero bilinear form with the $\BS$ property in $\Lin^2(c_0,c_0;\mathbb{K})$.
\end{example}

We finish the section with a very natural question. Let us bring to mind that $\BS(L_1[0,1];Y)= \{0\}$ for every Banach space $Y$ (see \cite{KL}). As $B_{c_0}$ does not have any extreme point as well as $B_{L_1[0,1]}$, it is natural to ask if the same kind of result can be derived. Or we can ask the denseness question for an arbitrary Banach space $Y$ when $B_X$ has no extreme point.

\begin{question}
Is it true that $\BS(c_0;Y) = \{0\}$ for an arbitrary Banach space $Y$?
\end{question}

\begin{question}
Let $X$ be a Banach space be such that $\operatorname{Ext}(B_X)=\emptyset$. Is it true that $\BS(X;Y)$ is not dense in $\Lin(X;Y)$ for every Banach space $Y$?
\end{question}


\begin{thebibliography}{99}
\bibitem{A} \textsc{M.~D.~Acosta}, The Bishop-Phelps-Bollob\'as property for operators on $C(K)$, \emph{Banach J. Math. Anal.} \textbf{10 (2)} (2016), 307--319.


\bibitem{AAP} \textsc{M.~D.~Acosta, F.~J.~Aguirre and R.~Pay\'a}, A new sufficient condition for the denseness of norm attaining operators, \emph{Rocky Mountain J. Math.} \textbf{26} (1996), 407--418.

\bibitem{AFW} \textsc{R.~M.~Aron, C.~Finet and E.~Werner}, Norm attaining $n$-linear forms and the Radon-Nikod\'ym property, \emph{$2^{\text{nd}}$ Conf. on Functions Spaces Lecture notes in Pure and Appl. Math} (1995), 19--28.

\bibitem{BS} \textsc{R.~Bhatia and  P.~\v{S}emrl}, Orthogonality of matrices and some distance problems, \emph{Linear Algebra Appl.} \textbf{287} (1999), 77--85.

\bibitem{B} \textsc{G.~Birkhoff}, Orthogonality in linear metric spaces, \emph{Duke Math. J.} \textbf{1} (1935), 169--172.

\bibitem{CS} \text{Y.~S.~Choi and H.~G.~Song}, Property (quasi-$\alpha$) and the denseness of norm attaining mappings, \emph{Math. Nachr.} \textbf{281} (2008), 1264--1272.

\bibitem{DU} \text{J.~Diestel and J.J. Uhl}, \emph{Vector Measures}, Amer. Math. Soc., Math. Surveys {\bf 15}, 1977.

\bibitem{FHH} \text{M.~Fabian, P.~Habala, P.~H\'ajeck, V. Montesinos Santaluc\'ia, J. Pelant and V. Zizler}, Functional analysis and infinite-dimensional Geometry, \emph{CMS Books in Mathematics} \textbf{8}. Springer-Verlag, New York, 2001. x+451 pp. ISBN: 0-387-95219-5

\bibitem{G} \text{A.~Grothendieck}, Produits tensoriels topologiques et espaces nucléaires, \emph{Mem. Amer. Math. Soc.} \textbf{16} (1955) 140 pp. (in French).

\bibitem{J} \text{R.~C.~James}, Orthogonality and linear functionals in normed linear spaces. \emph{Trans. Amer. Math. Soc.} \textbf{61} (1947) 265--292. 

\bibitem{K} \textsc{S.~K.~Kim}, Quantity of operators with Bhatia-\v{S}emrl property, \emph{Linear Algebra Appl.} \textbf{537} (2018), 22-37.

\bibitem{KL} \textsc{S.~K.~Kim and H.~J.~Lee}, The Birkhoff-James orthogonality of operators on infinite dimensional Banach spaces, \emph{Linear Algebra Appl.} \textbf{582} (2019), 440--451.

\bibitem{KLM} \textsc{S.~K.~Kim, H.~J.~Lee and M.~Mart\'in}, Bishop-Phelps-Bollob\'as property for bilinear forms on spaces of continuous functions, \emph{Math. Z.} \textbf{283} (2016), 157--167.

\bibitem{LS} \textsc{C.~K.~Li and H.~Schneider}, Orthogonality of matrices, \emph{Linear Algebra Appl.} \textbf{347} (2002), 115--122.

\bibitem{Lima} \textsc{A.~Lima}, Intersection properties of balls in spaces of compact operators, \emph{Ann. Inst. Fourier Grenoble} \textbf{28} (1978), 35--65.

\bibitem{L} \textsc{J.~Lindenstrauss}, On operators which attain their norm, \emph{Israel J. Math.} \textbf{1} (1963), 139--148.

\bibitem{PS} \textsc{K.~Paul and D.~Sain}, Orthogonality of operators on $(R^n,\|\,\|_\infty)$, \emph{Novi Sad J. Math.} \textbf{43} (2013), 121--129.

\bibitem{PSG} \textsc{K.~Paul, D.~Sain and P.~Ghosh}, Birkhoff-James orthogonality and smoothness of bounded linear operators, \emph{Linear Algebra Appl.} \textbf{506} (2016), 551--563.

\bibitem{SP} \textsc{D.~Sain and K.~Paul}, Operator norm attainment and inner product spaces, \emph{Linear Algebra Appl.} \textbf{439} (2013), 2448--2452.

\bibitem{SPH} \textsc{D.~Sain, K.~Paul and S.~Hait}, Operator norm attainment and Birkhoff-James orthogonality, \emph{Linear Algebra Appl.} \textbf{476} (2015), 85--97.

\bibitem{S} \textsc{C.~Stegall}, Optimization and differentiation in Banach spaces, \emph{Linear Algebra Appl.} \textbf{84} (1986), 191--211.

\bibitem{Z} \textsc{V.~Zizler}, On some extremal problems in Banach spaces, \emph{Math. Scand.} \textbf{32} (1973), 214--224.

\end{thebibliography}
\end{document}